\documentclass[12pt]{amsart}

\usepackage{enumerate}
\usepackage{tikz-cd}
\usepackage{color}
\usepackage[all]{xy}
\usepackage{amsmath, amssymb}
\usepackage{graphics}
\usepackage{graphicx} 
\usepackage{mathtools}
\usepackage[pdftex,hypertexnames=false]{hyperref}
\hypersetup{
	colorlinks=true,
	citecolor=blue,
	linkcolor=blue,
	urlcolor=blue
	}
	\usepackage{cleveref}
	\usepackage[square,numbers]{natbib}
	\usepackage{autonum}
\usepackage{geometry}
\numberwithin{equation}{subsection}
\theoremstyle{plain}
\newtheorem{theorem}{Theorem}[section]
\crefname{theorem}{Theorem}{Theorems}

\crefname{mainresult}{Main Result}{Main Results}
\newtheorem{lemma}[theorem]{Lemma}
\crefname{lemma}{Lemma}{Lemmas}
\newtheorem{proposition}[theorem]{Proposition}
\crefname{proposition}{Proposition}{Propositions}
\newtheorem{corollary}[theorem]{Corollary}
\crefname{corollary}{Corollary}{Corollaries}

\crefname{conjecture}{Conjecture}{Conjectures}
\theoremstyle{definition}
\newtheorem{definition}[theorem]{Definition}
\crefname{definition}{Definition}{Definitions}
\newtheorem{example}[theorem]{Example}
\crefname{example}{Example}{Examples}
\newtheorem{remark}[theorem]{Remark}
\crefname{remark}{Remark}{Remarks}

\crefname{criterion}{Criterion}{Criterions}

\crefname{problem}{Problem}{Problems}

\crefname{assumption}{Assumption}{Assumptions}
\newtheorem{notation}{Notation}
\crefname{notation}{Notation}{Notations}

\crefname{question}{Question}{Questions}

\newcommand{\End}{\operatorname{End}}
\newcommand{\Hom}{\operatorname{Hom}}

\newcommand{\rad}{\operatorname{rad}}

\newcommand{\soc}{\operatorname{soc}\nolimits}

\newcommand{\add}{\operatorname{add}}
\renewcommand{\mod}{\mathrm{mod}\,}
\newcommand{\Pfin}{\mathcal{P}^{<\infty}}
\newcommand{\Ifin}{\mathcal{I}^{<\infty}}
\newcommand{\cotilt}{\operatorname{cotilt}}

\newcommand{\proj}{\operatorname{proj}}

\newcommand{\ind}{\operatorname{ind}}

\newcommand{\F}{\mathbb{F}}

\newcommand{\Ext}{\operatorname{Ext}}

\newcommand{\findim}{\operatorname{fin.dim}}
\newcommand{\gldim}{\operatorname{gl.dim}}
\newcommand{\injdim}{\operatorname{inj.dim}}
\newcommand{\projdim}{\operatorname{proj.dim}}
\newcommand{\domdim}{\operatorname{dom.dim}}

\renewcommand{\L}{\Lambda}

\newcommand{\B}{\mathcal{B}}
\newcommand{\C}{\mathcal{C}}

\newcommand{\op}{{\rm op}}

\begin{document}
\title[Cohen-Macaulay truncated path and Nakayama algebras]{Cohen-Macaulay truncated path and Nakayama algebras}

\author[T. Honma]{Takahiro Honma}
\address{%
{\rm (T.H.)} Department of General Education \\
National Institute of Technology (KOSEN), Yuge College \\ 
Ochi \\
Ehime 794-2593\\ 
Japan}
\email{99cfqqc9@gmail.com}
\author[S. Usui]{Satoshi Usui$^*$}
\address{%
{\rm (S.U.)} Department of General Education \\
Tokyo Metropolitan College of Industrial Technology \\ 
8-17-1, Minamisenju, Arakawa-ku \\
Tokyo 116-8523\\ 
Japan}
\email{usui@metro-cit.ac.jp}

\thanks{$^*$Corresponding author}

\subjclass[2020]{Primary 16E65; 
Secondary 
16E05,   
16E10,  
16G20
}

\keywords{Cohen-Macaulay algebra,
Iwanaga-Gorenstein algebra, 
truncated path algebra,
Nakayama algebra}

\date{}

\dedicatory{}

\begin{abstract}
This paper characterizes Cohen-Macaulay algebras in the sense of Auslander and Reiten [{\it Progr.~Math.}~95(1991)] within the following two classes of finite dimensional algebras: (i) truncated path algebras and (ii) Nakayama algebras.
For the former, our characterization in terms of ordinary quivers shows that if a Cohen-Macaulay truncated path algebra $\L$ is connected, then $\L$ or $D(\L)$ gives a dualizing module, where $D$ denotes the standard duality. 
For the latter, we show that every Cohen-Macaulay Nakayama algebra $\L$ is Iwanaga-Gorenstein.
In particular, $\L$ always gives a dualizing module.
Consequently, there is no non-trivial Cohen-Macaulay algebra among truncated path and Nakayama algebras.
\end{abstract}

\maketitle

\section{Introduction}
\label{Introduction}

Let $R$ be a commutative Cohen-Macaulay local ring of finite Krull dimension with a dualizing module (also called a canonical module) $\omega$.
Then Sharp \cite[Theorem 3.11]{Sharp_1972} proved that there are quasi-inverse equivalences
\begin{align} \label{eq_1}
    -\otimes_R \omega:\Pfin(R) \to \Ifin(R) \quad\text{and}\quad \Hom_R(\omega,-):\Ifin(R) \to \Pfin(R), 
\end{align}
where $\Pfin(R)$ (resp.~$\Ifin(R)$) denotes the category of finitely generated $R$-modules of finite projective (resp.~injective) dimension.
Later, as a non-commutative analogue of commutative Cohen-Macaulay rings, Auslander and Reiten \cite{Auslander-Reiten_1991} introduced {\it Cohen-Macaulay algebras}, namely, artin algebras $\L$ admitting a $\L$-bimodule $W$, called a {\it dualizing module}, which induces the same quasi-inverse equivalences as in (\ref{eq_1}).
It is known that a dualizing module is unique up to isomorphism as a one-sided module but not as a bimodule; see \cref{remark_1}.
Also, there are other non-commutative analogues of commutative Cohen-Macaulay rings; see \cite{Beligiannis-Reiten_2007,Bahlekeh-Salarian-Toosi_2023,Goto-Nishida_2001}.

As in the case of commutative rings, Iwanaga-Gorenstein algebras \cite{Enochs-Jenda_Book},  which form a non-commutative analogue of commutative Gorenstein rings, are  precisely the Cohen-Macaulay algebras $\L$ for which the $\L$-bimodule $\L$ gives a dualizing module.
Besides, we have the following examples of Cohen-Macaulay algebras, 
where the first is due to Auslander and Reiten \cite{Auslander-Reiten_1991}, the second to Nishida \cite{Nishida_2002}, and the last two to Chan, Iyama and Marczinzik \cite{Chan-Iyama-Marczinzik_2024_Cohen-Macaulay}:
\begin{enumerate}
\item  artin algebras $\L$ such that the finitistic dimensions of $\L$ and $\L^\op$ are zero;
\item  the artin algebra $\overline{\L}$ given in \cite[Example 1]{Nishida_2002};
\item  
contracted preprojective algebras of Dynkin type; 
\item tensor products of finite dimensional Cohen-Macaulay algebras over a field $K$.
\end{enumerate}
These seem to be all known examples so far.
The algebras in (1) contain local algebras and are precisely the Cohen-Macaulay algebras $\L$ for which the $\L$-bimodule $D(\L)$ gives a dualizing module, where $D$ denotes the Matlis duality.
Therefore, it is natural to try to find Cohen-Macaulay algebras $\L$ for which neither $\L$ nor $D(\L)$ gives a dualizing module, more essentially, those for which a dualizing module, regarded as a one-sided module, admits a direct summand that is neither projective nor injective.
In fact, the algebra in (2) and some of the algebras in (3) are such non-trivial Cohen-Macaulay algebras.

In this paper, we characterize the Cohen-Macaulayness of two classes of finite dimensional algebras.

First, motivated by the example of Chan, Iyama and Marczinzik \cite[Example 1.3(1)]{Chan-Iyama-Marczinzik_2024_Cohen-Macaulay} showing that radical square zero truncated path algebras $KQ/\langle\text{all paths of length $2$}\rangle$ of quivers $Q$ without sink or source are Cohen-Macaulay algebras in (1), we focus on the truncated path algebras $\L=KQ/\langle\text{all paths of length $L+1$}\rangle$ of connected quivers $Q$, where $L\geq 1$.
After introducing the notion of {\it balanced quivers} (\cref{def_balanced}), we show in \cref{claim_4} that  $\L$ is Cohen-Macaulay if and only if $Q$ is balanced.
In this case, $\L$ or $D(\L)$ gives a dualizing module.
Thus, there is no non-trivial Cohen-Macaulay truncated path algebra.
This is our first main result.
Note that the connectedness of $Q$ is used only to simplify the statement; see \cref{claim_10} for the case where $Q$ is not necessarily connected.
Also, Chan, Iyama and Marczinzik \cite{Chan-Iyama-Marczinzik_2024_Cohen-Macaulay} introduced 
a class of Cohen-Macaulay algebras, called 
{\it minimal Auslander-Cohen-Macaulay algebras}, as a generalization of minimal Auslander-Gorenstein algebras \cite{Iyama-Solberg_2018}.
In \cref{claim_11}, we show that among truncated path algebras, minimal Auslander-Cohen-Macaulay algebras are precisely the minimal Auslander-Gorenstein Nakayama algebras.

Second, we consider Nakayama algebras. 
As our second main result, we show in \cref{claim_1} that a Nakayama algebra is Cohen-Macaulay if and only if it is Iwanaga-Gorenstein. 
Thus, there is no non-Iwanaga-Gorenstein Cohen-Macaulay Nakayama algebra; in other words, we can always choose $\L$ as a dualizing module over a Cohen-Macaulay Nakayama algebra $\L$.

Consequently, there is no non-trivial Cohen-Macaulay algebra among truncated path and Nakayama algebras.

This paper is organized as follows.
We first recall some basic notions and results related to Cohen-Macaulay algebras in \Cref{Preliminaries}.
We then prove our main results in \Cref{Main_results}.

\subsection*{Use of AI}
GPT-5.6(Sol) was used to improve the presentation of the paper. 



\section{Preliminaries} \label{Preliminaries} 

This section is devoted to recalling some definitions and basic results concerning Cohen-Macaulay algebras that will be needed later.

We first fix some notations and conventions.
Throughout this paper, all modules are assumed to be finitely generated right modules.
For a ring $\L$, denote by $\mod\L$ the category of $\L$-modules and by $\gldim\L$ its global dimension.
For $M\in\mod\L$, its projective and injective dimensions are denoted by $\projdim_\L M$ and $\injdim_\L M$, respectively. 
Assume that all subcategories are full and closed under isomorphisms.
For a Krull-Schmidt category $\C$, denote by $\ind\C$ the set of isomorphism classes of indecomposable objects in $\C$.
Denote by $|X|$ the cardinality of a set $X$.

\subsection{Cotilting modules} 

Since there is a tight connection between dualizing modules and  cotilting modules, we begin by recalling the definition of cotilting modules.
In this and the next subsections, all algebras are assumed to be artin algebras over a commutative artin ring $R$.

Let $\L$ be an algebra.
For a subcategory $\C$ of $\mod\L$ and a module $M\in\mod\L$, a {\it left $\C$-approximation} of $M$ is a morphism $f : M \to C$ in $\mod\L$ with $C\in\C$ such that every morphism $f^\prime : M\to C^\prime$ with $C^\prime\in\C$ factors through $f$.
We say that $\C$ is {\it covariantly finite} if every $\L$-module admits a left $\C$-approximation. 
We say that a morphism $f: M\to N$ is {\it left minimal} if every endomorphism $g: N \to N$ satisfying $gf=f$ is an isomorphism.
A left minimal left $\C$-approximation of $M$ is simply called a {\it  minimal left $\C$-approximation of $M$}.
If $M$ admits a left $\C$-approximation, then there uniquely exists a minimal left $\C$-approximation $f: M  \to C$; see \cite[Proposition 1.1]{Auslander_Reiten_1991_applications}.
We say that $\C$ is {\it coresolving} if it contains all injective $\L$-modules and is closed under extensions and cokernels of monomorphisms.
Denote by $\widehat\C$ the subcategory of $\L$-modules $N$ such that there is an exact sequence $0 \to X_n \to \cdots \to X_0 \to N \to 0$ with $X_i \in \C$.
Denote by $\Ifin(\L)$ the subcategory of $\L$-modules having finite injective dimension. 
Define $\Pfin(\L)$  dually.

We say that a module $T\in\mod\L$ is {\it cotilting} \cite{Miyashita_1986} if $T\in\Ifin(\L)$, $\Ext^i_\L(T, T) = 0$ for all $i>0$, and $D(\L)\in\widehat{\add T}$, where $D=\Hom_R(-, E)$ denotes the Matlis duality, where $E$ is the injective envelope of $R/\rad R$.  
By \cite[Corollary 1 on p.124]{Miyashita_1986}, we have $|\ind\add T|=|\ind\add \L|$ for all cotilting $\L$-modules $T$, where $\add M$ denotes the category of direct summands of direct sum of finite copies of a $\L$-module $M$. 
Moreover, by \cite[Theorem 1.5]{Miyashita_1986}, ${}_{\End_\L(T)}T$ is a cotilting module, and the natural algebra morphism $\L \to \End({}_{\End_\L(T)}T)^\op$, given by $x\mapsto[t\mapsto tx]$, is an isomorphism.  
Let $\cotilt\L$ be the set of equivalence classes of cotilting $\L$-modules with respect to the equivalence relation given by $T \sim T^\prime \Leftrightarrow \add T = \add T^\prime$. 
By \cite[Theorem 5.5]{Auslander_Reiten_1991_applications}, there is a bijection between $\cotilt\L$ and the set of covariantly  finite coresolving subcategories of $\mod\L$ contained in $\Ifin(\L)$, where we associate to $T\in\cotilt\L$ the subcategory $\widehat{\add T}$. 
There is a partial order on $\cotilt\L$ given by $T \geq U \Leftrightarrow \Ext^i_\L(T, U) = 0$ for all $i>0$.
Clearly, the injective cogenerator $D(\L)$ is the minimum element of $\cotilt\L$.
The following proposition characterizes the existence of the maximum element of $\cotilt\L$.

\begin{proposition}
\label{Preliminaries_1}
The following conditions are equivalent for $T\in\cotilt\L$.
\begin{enumerate}
    \item $T$ is a maximal element of $\cotilt\L$.
    \item $T$ is the maximum element of $\cotilt\L$.
    \item $\widehat{\add T} = \Ifin(\L)$.
    \item $\Ifin(\L)$ is covariantly finite. 
    \item Every simple $\End_\L(T)^\op$-module is a direct summand of the semisimple quotient $T/\rad T$ of the cotilting $\End_\L(T)^\op$-module $T$ by its Jacobson radical $\rad T$.
\end{enumerate}    
In this case, we have $\add T = \add X$, where $X$ denotes the minimal left $\Ifin(\L)$-approximation of $\L_\L$.
\end{proposition}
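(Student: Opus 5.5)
The plan is to prove the cycle (2)$\Rightarrow$(1)$\Rightarrow$(3)$\Rightarrow$(2), then show (3)$\Leftrightarrow$(4), and finally handle (5) separately through the duality between $\L$ and $\Gamma:=\End_\L(T)^\op$. The implication (2)$\Rightarrow$(1) is trivial. For (3)$\Rightarrow$(2), let $U\in\cotilt\L$. Then $U\in\Ifin(\L)=\widehat{\add T}$. Every $X\in\widehat{\add T}$ satisfies $\Ext^i_\L(T,X)=0$ for $i>0$: a finite $\add T$-resolution of $X$ together with the self-orthogonality of $T$ gives this by dimension shifting. Hence $T\geq U$.

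For (1)$\Rightarrow$(3), I use the Auslander--Reiten bijection \cite[Theorem 5.5]{Auslander_Reiten_1991_applications}. The order $T\geq U$ corresponds to $\widehat{\add T}\supseteq\widehat{\add U}$. Indeed, if $T\geq U$ then $U\in T^{\perp}\cap\Ifin(\L)=\widehat{\add T}$, where this equality is the standard description of $\widehat{\add T}$ for cotilting $T$. So maximality of $T$ means $\widehat{\add T}$ is maximal among covariantly finite coresolving subcategories contained in $\Ifin(\L)$. The remaining task is to show that $\Ifin(\L)$ itself is covariantly finite, and this is the main obstacle. My first attempt: for $M\in\mod\L$ take a left $\widehat{\add T}$-approximation $f\colon M\to X$. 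Given any $Y\in\Ifin(\L)$, I need $\Hom(M,Y)$ to factor through $f$. If $Y\notin\widehat{\add T}$, I would form a cotilting module $U$ attached to the smallest coresolving subcategory containing $\widehat{\add T}$ and $Y$. That subcategory is covariantly finite because it is generated by finitely many objects over a covariantly finite one, using Auslander--Reiten's results on $\widehat{\add}$ of Ext-injective objects. Then $U>T$, contradicting (1). Therefore $\widehat{\add T}=\Ifin(\L)$, which gives (3) and also (4).

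For (4)$\Rightarrow$(3), the set $\Ifin(\L)$ is coresolving: it contains the injectives and is closed under extensions and under cokernels of monomorphisms. If it is also covariantly finite, the bijection gives a cotilting $T'$ with $\widehat{\add T'}=\Ifin(\L)$. By (3)$\Rightarrow$(2) this $T'$ is the maximum, and any maximal $T$ satisfies $\add T=\add T'$. The final claim comes from \cite{Auslander_Reiten_1991_applications}. The cotilting module attached to a covariantly finite coresolving $\C$ is the direct sum of the Ext-projectives of $\C$, and these are obtained as the minimal left $\C$-approximation $\L\to X$, which is injective because $D(\L)\in\C$. Taking $\C=\Ifin(\L)$ gives $\add T=\add X$.

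For (5), I work over $\Gamma$, where ${}_\Gamma T$ is cotilting by Miyashita and $\End_\Gamma(T)^{\op}\cong\L$. Under the duality $\Hom_\L(-,T)$, $\widehat{\add T_\L}$ corresponds to the $\Gamma$-modules of finite projective dimension that are $T$-reflexive. Condition (3), that every module of finite injective dimension over $\L$ is resolved by $\add T$, then translates into the condition that every simple $\Gamma$-module appears in the top of ${}_\Gamma T$. This amounts to checking that $T^{\perp}$ admits no proper enlargement, which happens exactly when $\Hom_\Gamma(T,S)\neq0$ for every simple $S$. I would prove this translation by comparing the Ext-projectives of $\Ifin(\L)$ with the indecomposable summands of $T$ and counting via $|\ind\add T|=|\ind\add\L|$. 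This part is the most technical after (1)$\Rightarrow$(3).
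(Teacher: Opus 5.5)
\textbf{There are genuine gaps, and they sit exactly in the non-routine parts.} The routine parts are correct:
\begin{itemize}
\item (2)$\Rightarrow$(1);
\item your dimension-shifting proof of (3)$\Rightarrow$(2);
\item (3)$\Rightarrow$(4);
\item the identification $T\geq U\Leftrightarrow\widehat{\add U}\subseteq\widehat{\add T}$ via $\widehat{\add T}=T^{\perp}\cap\Ifin(\L)$.
\end{itemize}
Note that (4) does not involve $T$, so it cannot imply (3) for every $T$. For example, $T=D(\L)$ with $0<\gldim\L<\infty$ satisfies (4) but not (3). What your (4)$\Rightarrow$(3) paragraph actually proves is (1)$\wedge$(4)$\Rightarrow$(3), which is the sensible reading. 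So the real content of the proposition is (1)$\Rightarrow$(4).

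\textbf{Gap in (1)$\Rightarrow$(4).} You claim that the smallest coresolving subcategory containing $\widehat{\add T}$ and one further $Y\in\Ifin(\L)$ is covariantly finite. No such result is available in \cite{Auslander_Reiten_1991_applications}. That subcategory is built by iterating extensions and cokernels of monomorphisms indefinitely, and covariant finiteness is not inherited by such closure operations in general. Already the torsion class generated by a single module need not be functorially finite. Without this claim you have no cotilting $U>T$ with which to contradict maximality. The paper does not prove this step either; it imports (1)$\Leftrightarrow$(2)$\Leftrightarrow$(3) from \cite[dual of Proposition 3.2]{Iyama-Zhang_2019}. You need either that citation or an actual construction of a cotilting module strictly above $T$ from $Y\in\Ifin(\L)\setminus\widehat{\add T}$.

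\textbf{False lemma in the final statement.} For arbitrary $T\in\cotilt\L$, the minimal left $\widehat{\add T}$-approximation $\L\to X$ only satisfies $\add X\subseteq\add T$. For $T=D(\L)$, the category $\widehat{\add D(\L)}$ consists of the injective modules. Then $X$ is the injective envelope of $\L_\L$, and $\add X=\add D(\L)$ holds only when $\soc\L_\L$ contains every simple module. The reverse inclusion uses $\Ifin(\L)=\widehat{\add T}$ essentially, as follows.
\begin{itemize}
\item Let $T_0$ be an indecomposable summand of $T$, and take a projective cover $P\to T_0$ with $P$ a summand of $\L^m$.
\item Factor $P\to T_0$ through the left approximation $P\to\L^m\to X^m$. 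The resulting map $X^m\to T_0$ is an epimorphism.
\item Its kernel lies in $\Ifin(\L)$, since $\Ifin(\L)$ is closed under kernels of epimorphisms. Hence the kernel lies in $\widehat{\add T}\subseteq T^{\perp}$.
\item Therefore the sequence splits and $T_0\in\add X$.
\end{itemize}

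\textbf{No argument for (5).} Here you give only a plan. The asserted equivalence with ``$T^{\perp}$ admits no proper enlargement'' is precisely the content of \cite[dual of Proposition 7.1]{Dugas-Huisgen--Zimmermann_2011}, which the paper cites. A concrete way in is the following. The functor $\Hom_\L(-,T)$ carries the minimal left $\add T$-approximation $\L\to T^0$ to a projective cover of the $\End_\L(T)$-module $T$. So (5) says exactly that $\add T^0=\add T$. Since the cokernel of $\L\to T^0$ lies in ${}^{\perp}T$, this map is also the minimal left $\widehat{\add T}$-approximation. Then (3)$\Rightarrow$(5) follows from the splitting argument above, but (5)$\Rightarrow$(3) still needs a real proof.
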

\begin{proof}
The equivalence (3) $\Leftrightarrow$ (4) is clear.
The equivalences (1) $\Leftrightarrow$ (2)  $\Leftrightarrow$ (3) and (3) $\Leftrightarrow$ (5) follow from \cite[dual of Proposition 3.2]{Iyama-Zhang_2019} and \cite[dual of Proposition 7.1]{Dugas-Huisgen--Zimmermann_2011}, respectively. 
The last statement follows from \cite[dual of Supplement II]{Huisgen-Zimmermann_2021}.
\end{proof}

Following Chan, Iyama and Marczinzik \cite{Chan-Iyama-Marczinzik_2024_Cohen-Macaulay}, we call a cotilting module satisfying the equivalent conditions in  \cref{Preliminaries_1} {\it Ext-maximal}. 
Note that Auslander and Reiten  \cite{Auslander_Reiten_1991_applications,Auslander-Reiten_1991} call Ext-maximal cotilting modules {\it strong cotilting modules}.

Recall that the {\it finitistic dimension} of an algebra $\L$ is defined as
\begin{align}
    \findim\L:=\sup\{\projdim_\L X \mid X \in \Pfin(\L)\}.
\end{align}
From the definition, we have $\findim\L^\op=\sup\{\injdim_\L X \mid X \in \Ifin(\L)\}$.
We end this subsection with the following consequence of  \cref{Preliminaries_1}.

\begin{proposition}[{\cite[Example 1.3(1)]{Chan-Iyama-Marczinzik_2024_Cohen-Macaulay}}]\label{Chan-Iyama-Marczinzik_2024_Cohen-Macaulay_Example1.3(1)}
The following conditions are equivalent for an algebra $\L$.
\begin{enumerate}
    \item $D(\L)$ is an Ext-maximal cotilting $\L$-module.
    \item $\soc\L_\L$ contains all simple $\L$-modules. 
    \item $\findim\L^\op = 0$.
\end{enumerate}
\end{proposition}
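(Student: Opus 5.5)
We prove the cycle (1) $\Leftrightarrow$ (3) and (2) $\Leftrightarrow$ (3), applying \cref{Preliminaries_1} to the cotilting module $T=D(\L)$. This is legitimate, since $D(\L)$ is injective (so lies in $\Ifin(\L)$), has no higher self-extensions, and trivially satisfies $D(\L)\in\widehat{\add D(\L)}$.

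For (1) $\Leftrightarrow$ (3), we use condition (3) of \cref{Preliminaries_1}: $D(\L)$ is Ext-maximal if and only if $\widehat{\add D(\L)}=\Ifin(\L)$. Here $\widehat{\add D(\L)}$ consists of the modules $N$ admitting an exact sequence $0\to I_n\to\cdots\to I_0\to N\to 0$ with $I_i$ injective. We claim this is exactly the class of injective modules. Indeed, the epimorphism $I_1\to I_0$ onto the image $K$ of $I_1\to I_0$ sits in $0\to K\to I_0\to N\to 0$, and inducting from the left shows each successive kernel is injective, since a cokernel of a monomorphism between injectives splits off. Hence $N$ is injective. Therefore $D(\L)$ is Ext-maximal if and only if every module of finite injective dimension is injective, i.e.\ $\sup\{\injdim_\L X\mid X\in\Ifin(\L)\}=0$. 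By the formula recalled just before the statement, this is $\findim\L^\op=0$.

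For (2) $\Leftrightarrow$ (3), we use the dual picture: $\findim\L^\op=0$ means every finitely generated left $\L$-module of finite projective dimension is projective. By the standard fact (Bass), $\findim\L^\op=0$ holds if and only if every simple right module $S$ embeds in $\L_\L$. We argue this directly rather than citing it. If a simple $S$ does not occur in $\soc\L_\L$, then the idempotent $e$ with $\operatorname{top}(e\L)=S$ satisfies $\Hom_\L(S,\L)=0$. Consider the left module $\L e$ and the map $\L e'\to\L e$ given by a suitable radical element; more cleanly, take $D(S)$ as a left module and its projective cover $\L e\to D(S)$. The goal is to produce a left module of projective dimension exactly $1$, for instance $\operatorname{Tr}$ of a suitable module, using $\Ext^1_{\L^\op}(D S,\L)\cong$ something detecting $\Hom_\L(S,\L)$. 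Conversely, if $\soc\L_\L$ contains every simple, then for a left module $M$ with a minimal resolution $0\to P_1\xrightarrow{f}P_0\to M\to 0$, the map $f$ lands in the radical, so $D(f)$ kills the socle of $D(P_1)$. Using that every simple appears in $\soc\L_\L$, hence every indecomposable projective left module has a simple top that is also in its dual socle, one derives $f=0$. This reduces the claim to projective dimension $\le1$, which suffices because a standard dimension shift shows $\findim=0$ if and only if no module has projective dimension exactly $1$.

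The main obstacle is this last equivalence with the socle condition. If allowed, we would simply cite the classical result that $\findim\L^\op=0$ if and only if every simple right module is a submodule of $\L_\L$ (e.g.\ via $\Ext^1(\operatorname{Tr}S,\L)$). Alternatively, we would use criterion (5) of \cref{Preliminaries_1}: with $T=D(\L)$ we have $\End_\L(T)^\op\cong\L^\op$, and $T/\rad T=D(\L)/\rad D(\L)$ as a left $\L$-module is $D(\soc\L_\L)$. So (5) says precisely that every simple left module is a summand of $D(\soc\L_\L)$, i.e.\ every simple right module lies in $\soc\L_\L$, which gives (1) $\Leftrightarrow$ (2) directly and is the cleanest route.
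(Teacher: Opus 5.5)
Your final route is correct and is essentially the paper's: the statement is obtained there as a consequence of \cref{Preliminaries_1} applied to $T=D(\L)$. In your version, (1)$\Leftrightarrow$(3) comes from condition (3) once one notes, as you do, that $\widehat{\add D(\L)}$ is exactly the class of injective modules, and (1)$\Leftrightarrow$(2) comes from condition (5) via $\End_\L(D(\L))^{\op}\cong\L^{\op}$ and $D(\L)/\rad D(\L)\cong D(\soc\L_\L)$.

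The direct Bass-type sketch in your middle paragraph is not a proof as written. The $\operatorname{Tr}$ construction is left unfinished, and the claim that one ``derives $f=0$'' is unjustified. Drop it and present the condition-(5) argument as your proof of (1)$\Leftrightarrow$(2).
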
 



\subsection{Cohen-Macaulay algebras} 

We begin with the definition of Cohen-Macaulay algebras.

\begin{definition}[{\cite[p.223]{Auslander-Reiten_1991}}]\label{def_CM}
An algebra $\L$ is called {\it Cohen-Macaulay} if there exists a $\L$-bimodule $W$ which gives quasi-inverse equivalences
\begin{align}
    -\otimes_\L W:\Pfin(\L) \to \Ifin(\L) \quad\text{and}\quad \Hom_\L(W,-):\Ifin(\L) \to \Pfin(\L). 
\end{align}
We call the $\L$-bimodule $W$ a {\it dualizing module}.
\end{definition} 

From the definition,  a direct product $\L_1 \times\cdots\times\L_n$ of algebras $\L_i$ is Cohen-Macaulay if and only if each $\L_i$ is Cohen-Macaulay. 
Auslander and Reiten characterised dualizing modules in terms of cotilting modules.

\begin{proposition}[{\cite[Proposition 1.3]{Auslander-Reiten_1991}}]
\label{Auslander-Reiten_1991_1.3}
A $\L$-bimodule $W$ is a dualizing module if and only if $W$ is an Ext-maximal cotilting module as a left and as a right $\L$-module, and the natural algebra morphism $\L \to \End(W_\L)$, given by $x\mapsto [w\mapsto xw]$, is an isomorphism.
\end{proposition}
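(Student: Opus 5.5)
We prove the two implications separately. The key input on each side is Sharp/Auslander--Reiten style homological algebra with the cotilting theory recalled in the previous subsection.

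\emph{Necessity.} Suppose $W$ is a dualizing module. Since $\L_\L\in\Pfin(\L)$, the module $W_\L\cong \L\otimes_\L W$ lies in $\Ifin(\L)$. The equivalence $-\otimes_\L W$ restricts to an equivalence $\add\L_\L\to\add W_\L$. Hence $\End(W_\L)\cong\End(\L_\L)=\L$, and unwinding the identification shows this isomorphism is the natural map $x\mapsto[w\mapsto xw]$.

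For $\Ext$-rigidity I would use that both $\Pfin(\L)$ and $\Ifin(\L)$ are exact subcategories closed under extensions, and that an exact equivalence preserves Yoneda $\Ext$. The delicate point is that $-\otimes_\L W$ must be exact on $\Pfin(\L)$; this amounts to $\operatorname{Tor}_i^\L(X,W)=0$ for $X\in\Pfin(\L)$. I would derive that vanishing from the fact that the quasi-inverse $\Hom_\L(W,-)$ sends short exact sequences in $\Ifin(\L)$ to short exact sequences in $\Pfin(\L)$ (see below). Then $\Ext^i_\L(W,W)\cong\Ext^i_\L(\L,\L)$ computed inside $\Pfin(\L)$, which vanishes.

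To show that $\widehat{\add W}=\Ifin(\L)$, take $Y\in\Ifin(\L)$. Then $\Hom_\L(W,Y)\in\Pfin(\L)$ has a finite projective resolution. Applying $-\otimes_\L W$ gives a finite $\add W$-resolution of $Y\cong\Hom_\L(W,Y)\otimes_\L W$. In particular $D(\L)\in\widehat{\add W}$, so $W_\L$ is cotilting. By \cref{Preliminaries_1}(3) it is moreover Ext-maximal.

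For the left structure, $\L\cong\End(W_\L)^\op$-type identifications together with Miyashita's theorem give that ${}_\L W$ is cotilting. Ext-maximality on the left follows by applying \cref{Preliminaries_1}(5) symmetrically, or by running the same argument with the dual equivalences obtained by applying $D$.

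\emph{Sufficiency.} Assume the three conditions hold. Since $W_\L$ is Ext-maximal, $\Ifin(\L)=\widehat{\add W}$. By the Auslander--Reiten correspondence, $\Hom_\L(W,-)$ restricts to an exact equivalence $\widehat{\add W}\to\widehat{\add \Gamma}$, where $\Gamma=\End(W_\L)\cong\L$, so that $\widehat{\add\Gamma}=\Pfin(\L)$. This equivalence uses the vanishing $\Ext^i_\L(W,Y)=0$ for $Y\in\widehat{\add W}$, which holds by induction on the length of the resolution. Its inverse is $-\otimes_\L W$: on $\add\L$ it is clearly inverse, and we extend along finite resolutions. This extension requires $\operatorname{Tor}_i^\L(X,W)=0$ for $X\in\Pfin(\L)$, which is dual to Ext-rigidity of ${}_\L W$ via $D\operatorname{Tor}_i(X,W)\cong\Ext^i_\L(X,DW)$. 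The left-module conditions enter exactly at this step.

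\emph{Main obstacle.} The hardest part is the bookkeeping of Tor-vanishing and of the left structure, in particular the correct use of the left Ext-maximality in each direction. Failing a direct argument, the fallback is to quote the standard tilting-theoretic equivalence of Miyashita.
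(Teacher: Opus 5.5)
The paper gives no proof of this statement; it quotes it from Auslander--Reiten. Judged on its own, your sufficiency direction is a workable sketch. The necessity direction, however, has a genuine gap exactly at the point you call delicate.

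Both the Ext-rigidity of $W_\L$ and the inclusion $\Ifin(\L)\subseteq\widehat{\add W}$ rest on $\operatorname{Tor}^\L_i(X,W)=0$ for $X\in\Pfin(\L)$. You propose to derive this from exactness of $\Hom_\L(W,-)$ on short exact sequences in $\Ifin(\L)$, ``see below''. Nothing below proves this for an arbitrary dualizing module. Your only exactness argument is in the sufficiency part, where it is deduced from $\Ext^i_\L(W,W)=0$, the very thing you are trying to prove, so importing it is circular. Nor is it formal: the definition only gives an equivalence of additive categories, and $\Hom_\L(W,-)$ is only left exact. So $\Ext^1_\L(W,Y)=0$ for $Y\in\Ifin(\L)$ needs an actual argument.

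\textbf{The missing idea.} $\Pfin(\L)$ and $\Ifin(\L)$ are thick: if two terms of a short exact sequence lie in one of them, so does the third. Hence, for a monomorphism (resp.\ epimorphism) between objects of such a subcategory, the module cokernel (resp.\ kernel) is also the categorical cokernel (resp.\ kernel) inside it. Any equivalence preserves categorical (co)kernels.

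\emph{Ext-vanishing.} Take $0\to Y'\to I\to Y''\to 0$ with $Y'\in\Ifin(\L)$ and $I$ injective. Then $\Hom_\L(W,I)\to\Hom_\L(W,Y'')$ is a cokernel in $\Pfin(\L)$ of the injective map $\Hom_\L(W,Y')\to\Hom_\L(W,I)$. The module cokernel of that injective map lies in $\Pfin(\L)$, so it is this cokernel, and therefore $\Hom_\L(W,I)\to\Hom_\L(W,Y'')$ is surjective. This gives $\Ext^1_\L(W,Y')=0$, and dimension shifting inside $\Ifin(\L)$ gives all $i>0$.

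\emph{Tor-vanishing.} Dually, take $0\to\Omega X\to P\to X\to 0$ with $X\in\Pfin(\L)$ and $P$ projective. The map $\Omega X\otimes_\L W\to P\otimes_\L W$ is a kernel in $\Ifin(\L)$ of the surjection $P\otimes_\L W\to X\otimes_\L W$. The module kernel of that surjection lies in $\Ifin(\L)$, so the map is injective and $\operatorname{Tor}^\L_1(X,W)=0$; dimension shifting in $\Pfin(\L)$ gives all $i>0$.

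With these two facts, the rest of your necessity argument goes through.

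\textbf{Two smaller points.}

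First, left Ext-maximality does not follow by ``applying (5) symmetrically''. Applied to ${}_\L W$, condition (5) of \cref{Preliminaries_1} asks that $W_\L/\rad W_\L$ contain every simple $\L$-module. You have not shown this, and it does not follow from right Ext-maximality together with $\L\cong\End(W_\L)$: it fails for $W=D(\L)$ when $\findim\L^\op=0\neq\findim\L$. Your alternative is the one that works. Since $D\circ\Hom_\L(W,-)\circ D\cong W\otimes_\L-$ and $D\circ(-\otimes_\L W)\circ D\cong\Hom_{\L^\op}(W,-)$, $W$ is a dualizing module for $\L^\op$, and the right-hand argument then applies to $\L^\op$.

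Second, in the sufficiency part, $\Hom_\L(W,-)\colon\widehat{\add W}\to\Pfin(\L)$ is not an equivalence for an arbitrary cotilting $W$; take $W=D(\L)$ with $\findim\L\neq0$. Essential surjectivity is precisely the Tor-vanishing. That requires $\Ext^i_{\L^\op}(W,DX)=0$ for all $DX\in\Ifin(\L^\op)$, which uses left Ext-maximality and not merely Ext-rigidity of ${}_\L W$.
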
 

\begin{remark} \label{remark_1}
By \cref{Preliminaries_1,Auslander-Reiten_1991_1.3}, a dualizing module is uniquely determined up to isomorphism as a left and as a right module. 
However, it is not as a bimodule; see \cite[p.226]{Auslander-Reiten_1991}.
\end{remark}

One of the nice properties of a Cohen-Macaulay algebra is as follows.

\begin{proposition}[{\cite[Proposition 1.6]{Auslander-Reiten_1991}}]\label{Auslander-Reiten_1991_1.6}
Let $\L$ be a Cohen-Macaulay algebra with dualizing module $W$.
Then $\findim\L= \injdim_\L W = \injdim_{\L^\op} W =\findim\L^\op$.
\end{proposition}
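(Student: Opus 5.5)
We prove the equalities in the order $\findim\L = \injdim_\L W$, then $\injdim_\L W = \injdim_{\L^\op} W$, then $\findim\L^\op$, using \cref{Auslander-Reiten_1991_1.3}. That result says $W$ is an Ext-maximal cotilting module on both sides and $\L\cong \End(W_\L)$; symmetrically, via \cite[Theorem 1.5]{Miyashita_1986}, $\L\cong\End({}_\L W)^\op$.

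\textbf{Step 1: $\findim\L^\op=\injdim_\L W$.} By \cref{Preliminaries_1}(3), Ext-maximality of $W_\L$ gives $\Ifin(\L)=\widehat{\add W_\L}$. Take $X\in\Ifin(\L)$ and a finite resolution $0\to W_n\to\cdots\to W_0\to X\to 0$ with $W_i\in\add W$. Splitting it into short exact sequences and using the long exact sequences for $\Ext_\L(S,-)$ with $S$ simple, we get $\injdim X\le \injdim W$. (Dimension shifting: a cokernel of a monomorphism between modules of injective dimension at most $d$ has injective dimension at most $d$, and induction passes up the resolution.) Since $W$ itself lies in $\Ifin(\L)$, taking the supremum gives $\findim\L^\op=\sup\{\injdim_\L X\mid X\in\Ifin(\L)\}=\injdim W_\L$.

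\textbf{Step 2: the other side.} The same argument applied to ${}_\L W$, which is Ext-maximal cotilting over $\L^\op$, gives $\findim\L=\injdim_{\L^\op}W$.

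\textbf{Step 3: $\injdim W_\L=\injdim {}_\L W$.} This is the step I expect to be the main obstacle, since it links the two sides. It is the cotilting analogue of Zaks' lemma. Because $\Ext^i_\L(W,W)=0$ for $i>0$ and $\End(W_\L)\cong\L$, the functor $\Hom_\L(-,W)$ induces a duality between ${}^\perp W\cap\widehat{\add W}$-type subcategories. The plan is to compute $\injdim W_\L$ as the largest $n$ with $\Ext^n_\L(D\L,W)\neq0$, and to resolve $D\L$ by $\add W$, which is possible because $W$ is cotilting. Applying $\Hom_\L(-,W)$ to this resolution turns it into a projective resolution over $\L^\op$. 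One then identifies its homology with $\Ext_{\L^\op}(D\L,W)$ via the equivalence $\Hom_\L(W,-)\colon\Ifin\to\Pfin$ together with its left-sided counterpart.

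If this direct comparison becomes messy, the alternative is to use the equivalences of \cref{def_CM} directly. Since $-\otimes_\L W$ and $\Hom_\L(W,-)$ are exact on these categories and preserve the relevant resolutions, projective dimension of $M\in\Pfin(\L)$ corresponds to the $\add W$-resolution length of $M\otimes W$, which is the injective dimension computed in Step 1. This yields $\findim\L=\findim\L^\op$ directly. Combining that with Steps 1 and 2 gives all four equalities.
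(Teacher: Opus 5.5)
\textbf{The gap is Step 3.} Your Steps 1 and 2 are correct; the paper itself gives no proof and simply cites Auslander--Reiten. But those steps never cross sides: they give $\findim\L^\op=\injdim_\L W$ and $\findim\L=\injdim_{\L^\op}W$. Everything therefore rests on Step 3, and neither version of it works as written.

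In your ``alternative'', the identification of the $\add W$-resolution length of $X\in\Ifin(\L)$ with $\injdim_\L X$ is false. The module $W$ has resolution length $0$ but injective dimension $\injdim_\L W$. Conversely, $D(\L)$ is injective, yet its resolution length is $\injdim_{\L^\op}W$ (see below). What the equivalence actually yields is $\findim\L=\sup_{X\in\Ifin(\L)}\operatorname{resdim}_{\add W}X$, which is only Step 2 again.

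In your main plan, applying $\Hom_\L(-,W)$ to $0\to W_m\to\cdots\to W_0\to D(\L)\to 0$ does not produce a projective resolution of anything. It gives a complex of projective left modules whose cohomology is $\Ext^i_\L(D(\L),W)$ itself. The proposed identification of this with $\Ext^i_{\L^\op}(D(\L),W)$ is unjustified: the equivalence $\Hom_\L(W,-)$ on right modules does not act on this complex at all.

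\textbf{The missing idea is to apply $\Hom_\L(W,-)$ to the resolution of $D(\L)$ instead.} Since $\Ext^{>0}_\L(W,X)=0$ for all $X\in\widehat{\add W}=\Ifin(\L)$ and $\End(W_\L)\cong\L$, this functor is exact on such resolutions and sends $\add W$ to $\proj\L$. Because it is an equivalence $\Ifin(\L)\to\Pfin(\L)$, a syzygy lies in $\add W$ exactly when its image is projective. Hence $\operatorname{resdim}_{\add W}X=\projdim_\L\Hom_\L(W,X)$ for $X\in\Ifin(\L)$.

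For $X=D(\L)$ you get $\Hom_\L(W,D(\L))\cong D({}_\L W)$, so $m:=\operatorname{resdim}_{\add W}D(\L)=\injdim_{\L^\op}W$. Dimension shifting along a resolution of length $m$, using $\Ext^{>0}_\L(W,W)=0$, gives $\Ext^i_\L(D(\L),W)=0$ for $i>m$. On the other hand, $\Ext^{d}_\L(D(\L),W)\neq 0$ for $d=\injdim_\L W$, by minimality of the injective coresolution and the Nakayama functor. Together these give $\injdim_\L W\le\injdim_{\L^\op}W$. The same argument for ${}_\L W$, using $\L\cong\End({}_\L W)^\op$, gives the reverse inequality.

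This is the cotilting form of Zaks' lemma, equivalently Miyashita's equality $\injdim T_\L=\injdim{}_{\End(T)}T$ for cotilting $T$. You could also simply cite that result.
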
 

An algebra $\L$ is called {\it Iwanaga-Gorenstein} \cite{Enochs-Jenda_Book} if $\L$ has finite injective dimension as a left and as a right $\L$-module.
Iwanaga-Gorenstein algebras are precisely the Cohen-Macaulay algebras $\L$ whose dualizing modules are given by $\L$. 
As the following proposition shows, the finiteness of the projective dimension of a dualizing module on one side is enough to characterize the Iwanaga-Gorensteinness of Cohen-Macaulay algebras.

\begin{proposition}\label{claim_8}
The following conditions are equivalent for  a Cohen-Macaulay algebra $\L$ with dualizing module $W$.
\begin{enumerate}
\item  $\L$ is an Iwanaga-Gorenstein algebra.
\item $\L$ has finite injective dimension as a left or as a right $\L$-module.
\item $W$ has finite projective dimension as a left or as a right $\L$-module.
\end{enumerate}
\end{proposition}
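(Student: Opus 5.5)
The plan is to prove the cycle (1) $\Rightarrow$ (2) $\Rightarrow$ (3) $\Rightarrow$ (1), writing $d:=\findim\L=\findim\L^\op=\injdim_\L W=\injdim_{\L^\op}W$, which is finite by \cref{Auslander-Reiten_1991_1.6}.

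\textbf{(1) $\Rightarrow$ (2).} This holds by definition.

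\textbf{(2) $\Rightarrow$ (3).} Suppose first that $\injdim \L_\L<\infty$, so $\L_\L\in\Ifin(\L)$. The equivalences of \cref{def_CM} give an isomorphism $\L_\L\cong \Hom_\L(W,\L)\otimes_\L W$ with $P:=\Hom_\L(W,\L)\in\Pfin(\L)$. Taking a finite projective resolution $0\to P_n\to\cdots\to P_0\to P\to 0$ and applying $-\otimes_\L W$ would express $\L$ through the modules $P_i\otimes W\in\add W$. However, $-\otimes_\L W$ need not be exact, so I would avoid this route.

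Instead I would use the inverse functor. Since $\L_\L\in\Ifin(\L)$ and $\Hom_\L(W,-)$ maps $\Ifin(\L)$ into $\Pfin(\L)$, the module $\Hom_\L(W_\L,\L_\L)$ has finite projective dimension. The goal is finiteness of $\projdim W$ itself. Here the cotilting theory is the cleaner tool. By \cref{Auslander-Reiten_1991_1.3}, $W_\L$ is Ext-maximal cotilting, so by \cref{Preliminaries_1}(3) we have $\widehat{\add W_\L}=\Ifin(\L)\ni \L_\L$. There is therefore an exact sequence
\[
0\to W_n\to\cdots\to W_0\to \L\to 0, \qquad W_i\in\add W.
\]
Since $\Ext^{>0}(W,W)=0$, the standard argument for cotilting modules (for $X\in\widehat{\add W}$, the $\add W$-resolution can be chosen with each syzygy in $\widehat{\add W}$ and the left $W$-orthogonal category closed appropriately) shows that $\L_\L$ lies in the Ext-orthogonal ${}^{\perp}W$ as well as in $\Ifin$. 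Hence $\L_\L\in\add W_\L$, using that ${}^\perp W\cap\widehat{\add W}=\add W$. Counting indecomposables with $|\ind\add W|=|\ind\add\L|$ (Miyashita) then gives $\add W_\L=\add\L_\L$, so $W_\L$ is projective and (3) holds.

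To justify $\L\in{}^\perp W$: because $\L$ is projective, $\Ext^{i}_\L(\L,W)=0$ trivially. The real input is therefore only the fact ${}^\perp W\cap\widehat{\add W}=\add W$, which follows by splitting the resolution from the left end, each step using vanishing of $\Ext^1$. The case of left modules is symmetric, applying the same argument to $\L^\op$.

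\textbf{(3) $\Rightarrow$ (1).} Suppose $\projdim W_\L<\infty$. Then $W_\L\in\Pfin(\L)$, so $\Ext^i_\L(W,W)=0$ for $i>0$, and ${}^\perp W=\Pfin(\L)$ for a Cohen-Macaulay algebra. The paper's setup may not state this last identity directly, so I would avoid relying on it. The simpler route is as follows: $D(\L)\in\widehat{\add W}$, and every module of $\add W$ has finite projective dimension, so $D(\L)$ has finite projective dimension. Thus $\injdim{}_\L\L<\infty$, and (2) holds for the left side.

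It then remains to pass from one side to both sides. Once $\injdim{}_\L\L<\infty$, the argument for (2) $\Rightarrow$ (3) applied on the left gives ${}_\L W$ projective with $\add {}_\L W=\add {}_\L\L$. Dually, $\L_\L\cong \Hom_{\L^\op}(W, \ldots)$; more simply, since $\L\cong\End({}_\L W)^\op$ with ${}_\L W$ a projective generator, $W_\L$ is projective as well. Then $D(\L)\in\widehat{\add W_\L}$ gives $\injdim{}_\L\L<\infty$, and symmetrically $\injdim\L_\L<\infty$, so $\L$ is Iwanaga-Gorenstein.

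The main obstacle is this one-sided-to-two-sided transfer. I would handle it via the isomorphism $\L\to\End(W_\L)$ of \cref{Auslander-Reiten_1991_1.3}, which turns projectivity of $W$ on one side, together with $\add W=\add\L$, into a Morita-type statement on the other side.
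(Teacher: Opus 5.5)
Your proof is correct, but for the key step it takes a different route from the paper. The paper gets (3) $\Rightarrow$ (2) from the same observation you use, $D(\L)\in\widehat{\add W}$. It then proves (2) $\Rightarrow$ (1) by quoting \cite[Proposition 6.10]{Auslander_Reiten_1991_applications}, which, combined with the finiteness of the finitistic dimensions from \cref{Auslander-Reiten_1991_1.6}, upgrades finite injective dimension of $\L$ from one side to both sides. You replace that external input with an internal cotilting argument. Ext-maximality gives $\L_\L\in\Ifin(\L)=\widehat{\add W_\L}$, and trivially $\L_\L\in{}^{\perp}W$. Hence $\L_\L\in{}^{\perp}W\cap\widehat{\add W_\L}=\add W_\L$, and Miyashita's count forces $\add W_\L=\add\L_\L$. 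You then alternate sides using $D(\L)\in\widehat{\add W}$. This argument uses only facts recalled in Section 2 and shows directly that $\add W=\add\L$ on each side. The paper's proof is shorter because it outsources the one-sided-to-two-sided passage.

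Two small corrections.

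First, the Morita step is unnecessary. Once ${}_\L W$ is projective, ${}_\L D(\L)\in\widehat{\add{}_\L W}$ already gives $\injdim\L_\L<\infty$, and you had $\injdim{}_\L\L<\infty$ at the start. If you keep the step, the isomorphism you need is $\L\cong\End({}_\L W)^\op$ via right multiplication. That follows from Miyashita's theorem applied to $W_\L$ together with \cref{Auslander-Reiten_1991_1.3}, not from $\L\cong\End(W_\L)$ alone.

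Second, the aside ${}^{\perp}W=\Pfin(\L)$ is false in general. For a non-semisimple self-injective $\L$ one has $W=\L$ and ${}^{\perp}W=\mod\L$, while $\Pfin(\L)=\proj\L$. Since you never use it, delete it, together with the abandoned tensor-product route.
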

\begin{proof}
The implications (1) $\Rightarrow$ (2) and  (1) $\Rightarrow$ (3) are clear.
The implication (2) $\Rightarrow$ (1) follows from \cite[Proposition 6.10]{Auslander_Reiten_1991_applications} and  \cref{Auslander-Reiten_1991_1.6}.
To prove (3) $\Rightarrow$ (2), assume (3) holds.
Then, since $D(\L)\in\widehat{\add W}$ from the definition of cotilting modules, we can conclude that $D(\L)$  has finite projective dimension on one side.
\end{proof}

We end this section with the following consequence of  \cref{Chan-Iyama-Marczinzik_2024_Cohen-Macaulay_Example1.3(1)}.

\begin{proposition}[{Auslander-Reiten}]
\label{Chan-Iyama-Marczinzik_2024_Cohen-Macaulay_Example1.3(1)_corollary}
The following conditions are equivalent for an algebra $\L$.
\begin{enumerate}
\item $\L$ is a Cohen-Macaulay algebra with dualizing module $D(\L)$.
\item $\soc\L_\L$ contains all simple $\L$-modules, and $\soc{}_\L\L$ contains all simple $\L^\op$-modules.
\item $\findim\L = 0 = \findim\L^\op$.
\end{enumerate}
\end{proposition}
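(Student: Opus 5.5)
We prove the equivalences (2) $\Leftrightarrow$ (3) and (1) $\Leftrightarrow$ (2) by applying \cref{Chan-Iyama-Marczinzik_2024_Cohen-Macaulay_Example1.3(1)} to $\L$ and to $\L^\op$, and combining this with the characterization of dualizing modules in \cref{Auslander-Reiten_1991_1.3}.

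For (2) $\Leftrightarrow$ (3), \cref{Chan-Iyama-Marczinzik_2024_Cohen-Macaulay_Example1.3(1)} gives that $\soc\L_\L$ contains all simple $\L$-modules if and only if $\findim\L^\op=0$. Applying the same proposition to $\L^\op$ gives that $\soc{}_\L\L$ contains all simple $\L^\op$-modules if and only if $\findim\L=0$. Together these yield (2) $\Leftrightarrow$ (3).

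For (2) $\Rightarrow$ (1), we verify the criterion of \cref{Auslander-Reiten_1991_1.3} for the bimodule $W=D(\L)$.
\begin{enumerate}
\item By \cref{Chan-Iyama-Marczinzik_2024_Cohen-Macaulay_Example1.3(1)} applied to $\L$, the right module $D(\L)_\L$ is an Ext-maximal cotilting module.
\item By the same proposition applied to $\L^\op$, the left module ${}_\L D(\L)=D(\L_\L)$ is Ext-maximal cotilting over $\L^\op$. This uses that the standard dual of $\L^\op$ as a right $\L^\op$-module is the left $\L$-module $D(\L)$.
\item The natural map $\L\to\End(D(\L)_\L)$, $x\mapsto[w\mapsto xw]$, is an isomorphism. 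Indeed, $D$ gives $\End_\L(D(\L)_\L)\cong\End_{\L^\op}({}_\L\L)^{\op}$, which is $\L$ acting by left multiplication, and this identification is compatible with the stated map.
\end{enumerate}
Hence $D(\L)$ is a dualizing module, and (1) holds.

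For (1) $\Rightarrow$ (2), suppose $D(\L)$ is a dualizing module. By \cref{Auslander-Reiten_1991_1.3}, $D(\L)$ is Ext-maximal cotilting on both sides. Applying \cref{Chan-Iyama-Marczinzik_2024_Cohen-Macaulay_Example1.3(1)} (1) $\Rightarrow$ (2) to $\L$ and to $\L^\op$ then gives both socle conditions.

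The only step requiring care is the side bookkeeping: checking that the left $\L$-module $D(\L)$ is exactly the module $D(\L^\op)$ to which the proposition for $\L^\op$ applies, and that the endomorphism map has the correct side convention. Both are routine consequences of the duality $D$, so no real obstacle is expected.
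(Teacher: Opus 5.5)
Your proof is correct and follows the route the paper intends. The paper gives no written proof and simply presents this as a consequence of \cref{Chan-Iyama-Marczinzik_2024_Cohen-Macaulay_Example1.3(1)} applied to $\L$ and $\L^\op$, combined with the criterion of \cref{Auslander-Reiten_1991_1.3}; your side bookkeeping (${}_\L D(\L)=D(\L_\L)=D(\L^\op)_{\L^\op}$, $\soc \L^\op_{\L^\op}=\soc{}_\L\L$, and $\L\cong\End(D(\L)_\L)$ via left multiplication) is exactly what is needed.
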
 




\section{Main results} \label{Main_results}

In the rest of this paper, we assume that all algebras are finite dimensional algebras over a field $K$.
The aim of this section is to characterize the Cohen-Macaulayness of truncated path  and Nakayama algebras.

\subsection{Cohen-Macaulay truncated path algebras} 
\label{Cohen-Macaulay_truncated_path_algebras}

We begin by considering truncated path algebras.
For the basics on quivers and bound quiver algebras, we refer to \cite{ARS95,ASS06}. 
We always assume that  all quivers $Q$ are finite in the sense that the vertex set $Q_0$ and the arrow set $Q_1$ are both finite.

For $n\geq1$, a {\it path of length $n$} in a quiver $Q$ is a sequence $p=a_1\cdots a_n$ of arrows $a_i \in Q_1$ such that the target $t(a_i)$ of $a_i$ equals the source $s(a_{i+1})$ of $a_{i+1}$ for $1\leq i< n$.
Define $s(p):=s(a_1)$ and $t(p):=t(a_n)$.
For each $i\in Q_0$, the corresponding trivial path is denoted by $e_i$. 
Note that $s(e_i)=i=t(e_i)$.
Denote by $\B$ the set of all paths in $Q$.
A non-trivial path $c$ is called a {\it cycle} if $s(c)=t(c)$.
A vertex $i\in Q_0$ is called a {\it sink} if there is no arrow starting at $i$. We call $i\in Q_0$ {\it postcyclic} if there is a path from a vertex lying on a cycle to $i$. 
Define a {\it source} and a {\it precyclic vertex} dually.
We call $Q$ {\it acyclic} if it has no sink or source.

\begin{definition}\label{def_balanced}
A quiver is called {\it balanced} if the set of its postcyclic vertices coincides with the set of its precyclic vertices.
\end{definition}

Note that a quiver is balanced if and only if it has no postcyclic sink or precyclic source. 
We also note that every connected component of a balanced quiver is either an acyclic quiver or a quiver without sink or source.

Let $KQ/I$ be a monomial algebra, that is, a bound quiver algebra such that the admissible ideal $I$ is generated by paths.
A path $p$ in $I$ is called {\it minimal} if any path $q$ in $I$ such that $p=q^{\prime}q q^{\prime\prime}$ for some $q^{\prime}, q^{\prime\prime}\in\B$ equals $p$.
Let $\F$ be the set of minimal paths in $I$.
Then $\F$ is finite, since $I$ is finitely generated.
For $p\in\B$, when there is no danger of confusion, we simply write the canonical image $p+I$ in $KQ/I$ as $p$.
The following lemma characterizes Cohen-Macaulay monomial algebras $\L$ with $\findim\L=0$ in terms of quivers and minimal relations.

\begin{lemma}\label{claim_3}
Let $\L=KQ/I$ be a non-simple connected monomial algebra and $\F$ the set of minimal paths in $I$.
Then the following conditions are equivalent.
\begin{enumerate}
\item $\findim\L=0=\findim\L^\op$.
\item For each $i\in Q_0$, there exist minimal paths $f_1=a_1\cdots a_n$ and $f_2=b_1\cdots b_m$ in $\F$ with $a_j, b_j \in Q_1$ such that $s(a_n) = i = t(b_1)$.  
\end{enumerate}
\end{lemma}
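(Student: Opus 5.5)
By \cref{Chan-Iyama-Marczinzik_2024_Cohen-Macaulay_Example1.3(1)_corollary}, condition (1) holds if and only if $\soc\L_\L$ contains every simple right $\L$-module and $\soc{}_\L\L$ contains every simple left $\L$-module. The plan is therefore to translate these two socle conditions into combinatorics of paths, using the basis $\B\setminus I$ of the monomial algebra $\L$.

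First we describe the socle. Right modules are taken with the convention that $e_i\L$ is spanned by the paths starting at $i$. For a path $p\notin I$, the element $p$ lies in $\soc\L_\L$ exactly when $pa\in I$ for every arrow $a$ with $s(a)=t(p)$. Since $I$ is monomial, $\soc\L_\L$ is spanned by such paths, because a monomial ideal makes the right socle a span of basis paths. Such a path $p$ spans a copy of the simple module $S_{t(p)}$. Hence $S_i\subseteq\soc\L_\L$ if and only if there is a path $p\notin I$ with $t(p)=i$ that is right-maximal, meaning $pa\in I$ for all arrows $a$ starting at $i$. Dually, $S_i\subseteq\soc{}_\L\L$ for the left simple at $i$ if and only if some path $q\notin I$ with $s(q)=i$ is left-maximal, meaning $bq\in I$ for all arrows $b$ ending at $i$.

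Next we relate maximal paths to minimal relations; this is the main step. Suppose $p\notin I$ with $t(p)=i$ is right-maximal, and suppose first that some arrow $a$ starts at $i$. Then $pa\in I$ while $p\notin I$, so $pa$ contains a minimal relation as a subpath. That subpath must end with the final arrow $a$, since otherwise it would lie inside $p$. So there is $f_1=a_1\cdots a_n\in\F$ with $a_n=a$, which gives $s(a_n)=i$. Conversely, given $f_1=a_1\cdots a_n\in\F$ with $s(a_n)=i$, the path $a_1\cdots a_{n-1}$ is not in $I$ by minimality. This path ends at $i$ (it is $e_i$ if $n=1$, but admissibility forces $n\ge2$). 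We then extend it to a right-maximal path.

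The extension step is where care is needed. Starting from a nonzero path, we keep appending arrows while the result stays outside $I$. This terminates because $\L$ is finite dimensional. However, the endpoint can move away from $i$, so instead we try to \emph{prepend} arrows to the left to keep the endpoint at $i$. This does not obviously help either. So the cleaner route is different: the existence of a nonzero path $p$ ending at $i$ together with arrows $a$ such that $pa\in I$ for all $a$ starting at $i$ is exactly what is needed. To get this for all arrows at once, we must argue more carefully, which is the expected obstacle.

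A fallback is to avoid a direct translation and instead use connectedness. If some simple $S_i$ fails to lie in the socle, we construct directly a module of finite, nonzero projective dimension, for instance a quotient of $e_i\L$ by a path, contradicting $\findim=0$. For a monomial algebra, syzygies of cyclic modules $p\L$ are direct sums of such modules, so their projective dimensions are computable; this is the tool we would use. The remaining degenerate cases are a sink or a source $i$. There $e_i\L$ or $\L e_i$ is simple, and (2) must fail, since no arrow of a minimal relation can start at a sink. In that case we would show that (1) fails too: since $\L$ is connected and non-simple, there is an arrow $b$ ending at the sink $i$, and then $b\L\cong S_i$ is projective. Consequently $e_{s(b)}\L/b\L$ has projective dimension $1$, which contradicts $\findim\L=0$. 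Once these cases are settled, the two directions of the equivalence follow vertex by vertex.
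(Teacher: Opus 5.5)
Your reduction to the two socle conditions is fine, and so is your argument for (1)$\Rightarrow$(2). A nonzero path $p$ ending at a non-sink $i$ with $pa\in I$ for every arrow $a$ out of $i$ does produce a relation in $\F$ whose last arrow starts at $i$. Your $e_{s(b)}\L/b\L$ argument correctly rules out sinks, and sources dually.

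The gap is (2)$\Rightarrow$(1). You never show that a relation $f_1=a_1\cdots a_n\in\F$ with $s(a_n)=i$ yields a nonzero path ending at $i$ that is killed by \emph{every} arrow out of $i$. Your fallback does not address this direction either. It passes from a simple module missing from the socle to a module of nonzero finite projective dimension, which is the direction already supplied by \cref{Chan-Iyama-Marczinzik_2024_Cohen-Macaulay_Example1.3(1)_corollary}.

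In fact the step cannot be completed, because (2)$\Rightarrow$(1) fails for general monomial algebras. Let $Q$ have vertices $1,2$, a loop $x$ at $1$, and arrows $a\colon 1\to 2$, $b\colon 2\to 1$, and let $I=\langle x^2,ab\rangle$.
\begin{itemize}
\item The nonzero paths are $e_1,e_2,x,a,b,xa,bx,ba,bxa$. All paths of length $4$ lie in $I$, so $I$ is admissible and $\F=\{x^2,ab\}$.
\item Condition (2) holds: $x^2$ serves vertex $1$ and $ab$ serves vertex $2$.
\item However, $\rad e_2\L=b\L$ has basis $b,bx,ba,bxa$, and $z\mapsto bz$ is an isomorphism $e_1\L\to\rad e_2\L$. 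Hence $\projdim_\L(e_2\L/\rad e_2\L)=1$ and $\findim\L\neq 0$.
\item Also $e_1\L/\rad e_1\L$ does not lie in $\soc\L_\L$. The nonzero paths ending at $1$ are $e_1,x,b,bx$, and each survives right multiplication by $x$ or $a$.
\end{itemize}

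The paper's own proof passes over the same point. It asserts that $e_i\L/\rad e_i\L$ lies in $\soc e_{s(f_1)}\L$. In this example $f_1=x^2$, but $\soc e_1\L$ is spanned by $a$ and $xa$, so it is a sum of copies of the simple module at vertex $2$.

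What does hold is (1)$\Rightarrow$(2), together with (2)$\Rightarrow$(1) whenever some nonzero path ending at $i$ (respectively starting at $i$) can be chosen to be killed by all arrows out of $i$ (respectively into $i$). This is the case for $KQ/R^{L+1}$ with $Q$ having no sink or source: every path of length $L$ ending at $i$ is right-maximal, and such paths exist. That is the only setting in which the lemma is applied (\cref{claim_4}), and there you can verify both socle conditions directly.
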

\begin{proof}
By  \cref{Chan-Iyama-Marczinzik_2024_Cohen-Macaulay_Example1.3(1)_corollary}, it suffices to show that (2) is equivalent to the condition, denoted by $(*)$, that $\soc\L_\L$ and $\soc{}_\L\L$ contain all simple modules.
First, if $(*)$ holds, then $Q$ has no sink or source, and hence (2) holds.
Conversely, assume (2) holds.
For each $i\in Q_0$, take minimal paths $f_1=a_1\cdots a_n, f_2=b_1\cdots b_m\in\F$ with $a_j, b_j \in Q_1$ such that $s(a_n) = i = t(b_1)$.  
Then the simple modules $e_i\L/\rad e_i\L$ and $\L e_i/\rad\L e_i$ are contained in the socles of the indecomposable projective modules $e_{s(f_1)}\L$ and $\L e_{t(f_2)}$, respectively. 
This completes the proof.
\end{proof}

Recall that a {\it truncated path algebra of a quiver $Q$} is a monomial algebra of the form $KQ/R^{L+1}$ for some $L\geq1$, where $R=R_Q$ denotes the ideal of $KQ$ generated by all arrows in $Q$.
Note that $R^{L+1}$ is the ideal of $KQ$ generated by all paths of length $L+1$.
We are now ready to state the first main result of this paper.

\begin{theorem} \label{claim_4}
A truncated path algebra $\L = KQ/R^{L+1}$ of a connected quiver $Q$ is Cohen-Macaulay if and only if $Q$ is balanced. 
In this case, a dualizing module $W$ is given as follows:
\begin{align}
W= 
\begin{cases}
\L&\text{if $Q$ is acyclic,}\\
D(\L)&\text{otherwise.}\\
\end{cases}
\end{align}
In particular, 
\begin{align}
\findim\L= 
\begin{cases}
\gldim\L & \text{if $Q$ is acyclic,}\\
0 & \text{otherwise.}\\
\end{cases} 
\end{align}
\end{theorem}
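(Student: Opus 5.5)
The plan is to treat the two directions separately, reducing the \emph{if} direction to results already established. (Here \emph{acyclic} must be read as ``having no oriented cycles''; this is what the formulas $W=\L$ and $\findim\L=\gldim\L$ in the statement require. With this reading, a connected quiver is balanced precisely when it is either acyclic or has no sink or source.)

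\emph{Sufficiency.} First suppose $Q$ has no oriented cycles. Then $\gldim\L<\infty$, so $\L$ is Iwanaga-Gorenstein. Hence $\L$ is Cohen-Macaulay with $W=\L$, and $\findim\L=\gldim\L$.

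Next suppose $Q$ is connected, balanced and contains a cycle. Let $i$ be any vertex; I first show it is both precyclic and postcyclic. Walk along an underlying path from a vertex lying on the cycle to $i$. An arrow out of a postcyclic vertex leads to a postcyclic vertex, and an arrow into a precyclic vertex comes from a precyclic vertex. Balancedness lets us pass between the two notions at each step, so $i$ inherits both properties. It follows that $Q$ has no sink or source.

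I then verify condition (2) of \cref{claim_3}, where $\F$ is the set of paths of length $L+1$. For a vertex $i$, walking backwards from $i$ (no sources) gives a path of length $L$ ending at $i$. Appending an arrow starting at $i$ (no sinks) gives $f_1\in\F$ whose last arrow starts at $i$. Dually one builds $f_2$ whose first arrow ends at $i$. So $\findim\L=0=\findim\L^\op$, and \cref{Chan-Iyama-Marczinzik_2024_Cohen-Macaulay_Example1.3(1)_corollary} gives $W=D(\L)$. The formula for $\findim\L$ follows in both cases.

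\emph{Necessity.} Assume $\L$ is Cohen-Macaulay but $Q$ is not balanced. Then $Q$ has a cycle. After possibly passing to $\L^\op$ (which is again a truncated path algebra, of $Q^\op$), we may assume there is a postcyclic sink $s$. Then $S_s=e_s\L$ is projective. Choosing an arrow $a\colon t\to s$ with $t$ postcyclic, the module $e_t\L/a\L$ has projective dimension exactly $1$, so $\findim\L\geq 1$.

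By \cref{Auslander-Reiten_1991_1.6} it follows that $\injdim W=\findim\L\geq 1$ on both sides. On the other hand, at the vertices of the cycle the simple modules have infinite projective and injective dimension: syzygies over truncated path algebras are direct sums of modules $e_j\L$ and $e_jR^k\L$, and these propagate around the cycle indefinitely. The aim is to show that these two facts clash with the existence of an Ext-maximal cotilting bimodule.

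Concretely, I would compute the minimal left $\Ifin(\L)$-approximation $X$ of $\L_\L$ from \cref{Preliminaries_1} and its left-hand analogue. I would then show that condition (5) of \cref{Preliminaries_1}, or the isomorphism $\L\cong\End(W_\L)$ required in \cref{Auslander-Reiten_1991_1.3}, fails. The point is that the summand of $X$ at a cyclic vertex must be injective, since modules supported near the cycle and lying in $\Ifin(\L)$ appear to be forced to be injective. Meanwhile the summand at $s$ or $t$ is non-injective, and this asymmetry should contradict the equality $\findim\L=\findim\L^\op$ or the endomorphism ring condition.

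This last step is the main obstacle. The first thing I would try is an explicit classification of the modules in $\Ifin(\L)$ supported on the postcyclic region, using the uniserial-like syzygy structure of $KQ/R^{L+1}$. From that I would compute $\findim\L^\op$ directly and aim to show that it differs from $\findim\L$.
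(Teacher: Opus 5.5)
Your \emph{if} direction is correct and is the paper's argument. You even supply the justification, left implicit there, that a connected balanced quiver containing a cycle has no sink or source. Your reading of ``acyclic'' as ``without oriented cycles'' is the one the statement needs.

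The \emph{only if} direction, however, is not proved. The facts you establish, $\findim\L\geq1$ and infinite projective and injective dimension of the simples on the cycle, do not by themselves contradict Cohen-Macaulayness. Both mechanisms you propose for a contradiction also fail, as the following example shows. Write $S_i$ and $I_i$ for the simple and indecomposable injective right modules at $i$. Take $Q$ with arrows $a\colon1\to2$, a loop $c$ at $2$, $b\colon2\to3$, and $L=1$, so $\dim_K\L=6$ and $3$ is a postcyclic sink.

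\emph{The $\findim$ comparison cannot work.} Since $Q^\op\cong Q$, we have $\L^\op\cong\L$, so $\findim\L=\findim\L^\op$ $(=1)$.

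\emph{The injectivity heuristic is wrong.} The uniserial module $M=e_2\L/b\L$, with top and socle $S_2$, embeds in $I_2$ with cokernel $S_1=I_1$. So $M\in\Ifin(\L)$ is non-injective and supported at the cyclic vertex. In fact the minimal left $\Ifin(\L)$-approximations of $e_1\L$, $e_2\L$, $e_3\L$ are $I_2$, $M\oplus I_3$ and $I_3$. Thus the summand at the cyclic vertex is non-injective while the one at the sink is injective, contrary to your heuristic.

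\emph{What actually fails is the endomorphism condition.} If $W$ were dualizing, \cref{Preliminaries_1} would give $\add W_\L=\add(I_2\oplus I_3\oplus M)$. A direct Hom count then yields $\dim_K\End_\L(W_\L)\geq10>6=\dim_K\L$, contradicting \cref{Auslander-Reiten_1991_1.3}.

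The ingredient you are missing is the one the paper imports. If $W$ is a dualizing module, then $D(W)$ is a strong tilting module on both sides. Corollary~7.2 of Dugas--Huisgen-Zimmermann \cite{Dugas-Huisgen--Zimmermann_2011}, which rests on their explicit description of $\Pfin$ over truncated path algebras, shows that this forces $Q$ to have no precyclic source and no postcyclic sink. To make your route self-contained you would need a general version of the computation above. That means describing the minimal left $\Ifin(\L)$-approximations of all $e_i\L$ for arbitrary $Q$ and $L$. You would then have to prove that $\End_\L(W_\L)\not\cong\L$, or that Ext-maximality fails on the left, whenever a postcyclic sink exists. As written, the necessity half is a plan rather than a proof, and the concrete strategies it names would not succeed.
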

\begin{proof}
If $\L$ is a Cohen-Macaulay algebra with dualizing module $W$, then, since $D(W)$ is a strong tilting module on each side,  \cite[Corollary 7.2]{Dugas-Huisgen--Zimmermann_2011} concludes that $Q$ has no precyclic source or postcyclic sink, that is, $Q$ is balanced.
Conversely, assume that $Q$ is balanced.
Then $Q$ has a sink or a source if and only if $Q$ is acyclic if and only if $\gldim\L<\infty$.
In this case, $\L$ is clearly Cohen-Macaulay. 
Also, if $\gldim\L=\infty$,
then, since $Q$ has no sink or source, \cref{claim_3} concludes that $\L$ is a Cohen-Macaulay algebra with $\findim\L=0$.
The last statement is now clear.
\end{proof}

\begin{remark}
The theorem shows that there is no non-trivial Cohen-Macaulay truncated path algebra. 
\end{remark}

As a consequence of \cref{claim_4}, we obtain the following characterization of (not necessarily connected) Cohen-Macaulay truncated path algebras.

\begin{corollary}
\label{claim_10}
A truncated path algebra $\L = KQ/R^{L+1}$ of a  quiver $Q$ is  Cohen-Macaulay if and only if $Q$ is balanced.
In this case, the $\L$-bimodule 
\begin{align}
W= \prod_{Q^\prime} KQ^\prime/R^{L+1} \oplus \bigoplus_{Q^{\prime\prime}} D\!\left(KQ^{\prime\prime}/R^{L+1}\right) 
\end{align}
gives a dualizing module, where $Q^\prime$ (resp.~$Q^{\prime\prime}$) runs over the acyclic connected components of $Q$ (resp.~the connected components of $Q$ without  sink or source). 
In particular, 
\begin{align}
\findim\L= 
\begin{cases}
\gldim\prod_{Q^\prime} KQ^\prime/R^{L+1} & \text{if such a $Q^\prime$ exists,}\\
0 & \text{otherwise.}\\
\end{cases} 
\end{align}
\end{corollary}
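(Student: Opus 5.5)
We reduce to the connected case, using the remark after \cref{def_CM} that a direct product of algebras is Cohen-Macaulay if and only if each factor is. Decompose $Q$ into its connected components $Q_1,\dots,Q_r$. Then $\L=KQ/R^{L+1}\cong\prod_{k}KQ_k/R_{Q_k}^{L+1}$, because every path lies in a single component and the ideal $R^{L+1}$ splits accordingly. Being balanced is also a componentwise condition. Indeed, a vertex is postcyclic (resp.~precyclic) exactly when it is postcyclic (resp.~precyclic) within its own component, since the relevant paths and cycles stay inside one component. Hence $Q$ is balanced if and only if every $Q_k$ is balanced. Combining this with \cref{claim_4} for each connected $Q_k$ gives the first equivalence.

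For the dualizing module, we use \cref{claim_4} together with the preceding observation. Each $Q_k$ is balanced, so it is either acyclic (in the paper's sense: it has a sink or a source) or it has no sink or source. Let $\L_k:=KQ_k/R^{L+1}$. For each $k$ we obtain the dualizing module $W_k=\L_k$ or $W_k=D(\L_k)$ accordingly. Next we check that $W=\prod_k W_k$, viewed as a $\L$-bimodule through the projections $\L\to\L_k$, is a dualizing module for $\L$. Since $\mod\L\simeq\prod_k\mod\L_k$, the subcategories $\Pfin(\L)$ and $\Ifin(\L)$ decompose componentwise. Moreover, $-\otimes_\L W$ and $\Hom_\L(W,-)$ act factorwise as $-\otimes_{\L_k}W_k$ and $\Hom_{\L_k}(W_k,-)$, so they give quasi-inverse equivalences. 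Alternatively, \cref{Auslander-Reiten_1991_1.3} can be applied directly, because the Ext-maximal cotilting property and the condition $\End(W_\L)\cong\L$ are both componentwise. Writing the finite product as a direct sum, this $W$ is exactly the displayed module.

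The finitistic dimension is handled by \cref{Auslander-Reiten_1991_1.6}. It gives $\findim\L=\injdim_\L W=\max_k\injdim_{\L_k}W_k$. For a component with no sink or source, $W_k=D(\L_k)$ has injective dimension $0$. For an acyclic component, $\injdim\L_k=\findim\L_k=\gldim\L_k$ by \cref{claim_4}. Hence the maximum over the acyclic components equals $\gldim\prod_{Q'}KQ'/R^{L+1}$ if such a component exists, and $0$ otherwise.

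I expect no real obstacle here. The only point needing care is the bookkeeping: checking that the bimodule structures and the two functors really decompose along the product, and that "acyclic" for a component is used in the paper's sense (having a sink or source), which for balanced components is the same as finite global dimension.
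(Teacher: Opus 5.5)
Your proposal is correct and matches the paper's intended argument. The paper gives no separate proof: it presents the corollary as an immediate consequence of \cref{claim_4}, via the splitting $\Lambda\cong\prod_k KQ_k/R^{L+1}$ over connected components and the remark that a product is Cohen-Macaulay exactly when each factor is, which is your reduction.

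Your reading of ``acyclic'' as ``having a sink or a source'' is harmless. The paper's literal definition (``no sink or source'') is evidently a typo for ``no oriented cycle'', and for a connected balanced quiver that meaning is equivalent to yours.
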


\begin{example}
Consider a truncated path algebra $\L = KQ/R^{L+1}$, 
where $Q$ is the following (disconnected) quiver:
\begin{align}
Q: \ \xymatrix{
    1 \ar[r] & 2 \ar[r] & 3 
}\hspace{10mm}
\xymatrix{
    4 \ar[r] \ar@(ul,dl) & 5 \ar[r]  & 6 \ar@(ur,dr) \\
} 
\end{align}
Let $Q^1$ and $Q^2$ be the connected components of $Q$ such that $Q^1_0=\{1, 2, 3\}$ and $Q^2_0=\{4, 5, 6\}$.
For $i=1, 2$, let $\L_i :=KQ^i/R^{L+1}$.
Note that $\L=\L_1\times\L_2$ is non-Iwanaga-Gorenstein, since $\projdim_\L D(\L_2)=\infty$.
Since $Q$ is balanced,  \cref{claim_10} implies that $\L$ is a Cohen-Macaulay algebra such that  $W=\L_1\oplus D(\L_2)$ is a dualizing module, and 
\begin{align}
\findim\L
= \gldim\L_1
= \begin{cases}
    2 &\text{if $L=1$},\\
    1 &\text{if $L\geq 2$}.
\end{cases}
\end{align}
\end{example}

Let $\L$ be an algebra.
For a $\L$-module $X$ with minimal injective resolution $0\to X \to I^0(X)\to I^1(X) \to \cdots$, its {\it dominant dimension} $\domdim_\L X$ is defined as the minimal $n \geq 0$ such that $ I^n(X)\not\in\proj\L$ or as infinite if no such $n$ exists.
The {\it dominant dimension} of $\L$  is defined as $\domdim \L:=\domdim_\L \L$.
By \cite[Theorem 4]{Mueller_1968}, we have $\domdim \L = \domdim \L^\op$. 
For $n\geq 1$, we call $X$ {\it $n$-Gorenstein} if $\projdim_\L I^i(X) \leq i$ for $1\leq i <n$.
From the definition, every $n$-Gorenstein module is $m$-Gorenstein for all $m\leq n$; and $X$ is $1$-Gorenstein if and only if $\domdim_\L X \geq 1$.
By \cite[Theorem 3.7]{Fossum-Griffith-Reiten_2006}, $\L$ is $n$-Gorenstein if and only if $\L^\op$ is $n$-Gorenstein.
Moreover, by \cite[Proposition 2.6]{Chan-Iyama-Marczinzik_2024_Cohen-Macaulay}, if $\L$ is a Cohen-Macaulay algebra with dualizing module $W$, then $\L$ is $n$-Gorenstein if and only if $W$ is $n$-Gorenstein.
In this case, $\projdim_\L I^i(\L) =  \projdim_\L I^i(W)$, so that $\domdim_\L  W = \domdim \L = \domdim \L^\op = \domdim_{\L^\op} W$, which we write as $\domdim W$.
The following proposition characterizes $1$-Gorenstein Cohen-Macaulay truncated path algebras in terms of ordinary quivers.

\begin{proposition}\label{claim_9}
A truncated path algebra $\L = KQ/R^{L+1}$
is $1$-Gorenstein Cohen-Macaulay if and only if $\L$ is Nakayama.
In this case, $\L$ is Iwanaga-Gorenstein.
\end{proposition}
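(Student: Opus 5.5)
We first reduce to connected $Q$. Being Cohen-Macaulay and being $1$-Gorenstein (i.e.\ $\domdim\L\ge 1$) are both checked componentwise, and so is being Nakayama. So let $Q$ be connected. Recall that a truncated path algebra is Nakayama if and only if $Q$ is a linearly oriented $A_n$ or an oriented cycle $\tilde A_n$.

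For the ``if'' direction and the last sentence, we treat the two shapes separately.
\begin{enumerate}
\item If $Q$ is an oriented cycle, every $e_i\L$ is uniserial of length $L+1$, and so is every $D(\L e_i)$. Hence $\L$ is selfinjective, in particular Iwanaga-Gorenstein, and $\domdim\L=\infty$.
\item If $Q$ is linear $1\to\cdots\to n$, then $Q$ is balanced, so $\L$ is Cohen-Macaulay by \cref{claim_4}. Moreover $\gldim\L<\infty$, so $\L$ is Iwanaga-Gorenstein.
\end{enumerate}
To get $1$-Gorensteinness in the linear case, we compute directly. The projective $e_i\L$ is uniserial with socle at $j=\min(i+L,n)$, and its injective envelope is $D(\L e_j)$. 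This injective is uniserial of length $\min(L+1,j)$ with top at $\max(1,j-L)$. Comparing lengths shows $D(\L e_j)\cong e_{\max(1,j-L)}\L$, so the envelope is projective. Thus $\domdim\L\ge1$.

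For the ``only if'' direction, assume $\L$ is $1$-Gorenstein and Cohen-Macaulay, so $Q$ is balanced by \cref{claim_4}. By Müller's theorem, $\domdim\L^\op\ge 1$ as well, so the situation is left-right symmetric. The socle of $e_i\L$ is spanned by the maximal nonzero paths starting at $i$. These are the paths of length $L$, together with the shorter paths ending in a sink. For each such path $p$ ending at $j$, the injective $D(\L e_j)$ must be projective, hence isomorphic to some $e_k\L$. This forces $\L e_j$ to have simple socle, i.e.\ a unique left-maximal path ending at $j$.

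In the case without sinks or sources, all maximal paths have length exactly $L$. The unique length-$L$ path ending at $j$ must then be $p$, so $k=i$ and $e_i\L\cong D(\L e_j)$ has simple socle. Hence there is exactly one path of length $L$ from $i$, which forces the out-degree of $i$ to be $1$. Dually, every in-degree is $1$, and since $Q$ is connected it is an oriented cycle.

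In the case where $Q$ has no oriented cycle, I would run the same argument by induction along $Q$, starting from a source $i$. A path from a source is automatically left-maximal, so the argument above again gives that $e_i\L$ is injective with simple socle and that $i$ has out-degree $1$. Propagating this along the unique arrow, and dually from sinks, should show that all in- and out-degrees are at most $1$, so $Q$ is linear $A_n$.

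The main obstacle is this acyclic-type case. For a non-source vertex $i$, a maximal path from $i$ need not be left-maximal, so one cannot directly conclude $k=i$. The first thing I would try is to extend $p$ backwards to a left-maximal path $p'p$ starting at some $k$. Then $e_k\L$ is uniserial, which pins down the arrows along $p'p$. Sweeping this over all vertices should rule out branching.
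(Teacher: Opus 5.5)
The parts you complete are correct: the reduction to connected $Q$, the ``if'' direction together with the last sentence (including the dominant dimension computation for linear $Q$), and the ``only if'' argument for $Q$ without sink or source. The gap is the other half of the ``only if'' direction, where $Q$ has no oriented cycles. There you only outline an induction that ``should'' rule out branching. The obstacle you point out yourself (for a non-source $i$ one cannot conclude $k=i$) is left unresolved, so as written the proposal does not prove the proposition.

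Your closing idea, extending $p$ backwards and using that $e_k\L$ is uniserial, is the right one. It finishes the proof once you see that $k=i$ is never needed and that no induction is required.

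Take an \emph{arbitrary} arrow $a\colon v\to w$ and extend it to the right to a path $p$ that has length $L$ or ends at a sink. Then $p$ spans a simple summand of $\soc(e_v\L)$. So for $j=t(p)$, the module $D(\L e_j)$ is a summand of the injective envelope of $e_v\L$, which is projective since $\domdim\L\geq 1$. Hence $D(\L e_j)\cong e_k\L$ for some $k$.

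It follows that $e_k\L$ and $\L e_j\cong D(e_k\L)$ both have simple socle. For a truncated path algebra this means they are uniserial, because the socle is spanned by the maximal paths and every nonzero path extends to one. So the nonzero paths ending at $j$ are exactly the terminal segments of one path $u=(c_0\to\cdots\to c_\ell=j)$. The top of $D(\L e_j)$ is $S_{c_0}$ and $\dim e_k\L=\dim D(\L e_j)$. Hence $k=c_0$, and the nonzero paths starting at $k$ are exactly the initial segments of the same $u$.

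Since $\ell\le L$, suppose some $c_m$ with $m<\ell$ had a second outgoing arrow, or some $c_m$ with $m\ge 1$ had a second incoming arrow. This would produce a nonzero path from $k$ (resp.\ into $j$) that is not a segment of $u$, which is impossible. So $c_0,\dots,c_{\ell-1}$ have out-degree $1$ and $c_1,\dots,c_\ell$ have in-degree $1$.

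Now $p$ is a nonzero path into $j$, hence a terminal segment of $u$ of positive length. So $a$ is one of the arrows $c_m\to c_{m+1}$ of $u$, and therefore $v$ has out-degree $1$ and $w$ has in-degree $1$. Since $a$ was arbitrary, every in- and out-degree of $Q$ is at most $1$. A connected such $Q$ is a linearly oriented $A_n$ or an oriented cycle.

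This argument treats both of your cases at once and uses neither balancedness nor M\"uller's theorem. In fact it shows that for truncated path algebras, $\domdim\L\ge 1$ alone forces $\L$ to be Nakayama.

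The paper argues differently. It starts from the explicit dualizing module of \cref{claim_10} and uses that projective-injective modules over monomial algebras are uniserial. For components without sink or source, projectivity of $D(KQ''/R^{L+1})$ gives selfinjectivity at once. For the acyclic components the paper only says ``for the same reason'', and a degree argument like the one above is what makes that step explicit.
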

\begin{proof}
The \lq\lq if\rq\rq~part follows from \cref{claim_10} and the fact that the ordinary quiver of a Nakayama algebra is a disjoint union of linear and cyclic quivers (cf.~\cite{Abrar_2012_PhD}).
Conversely, assume that $\L$ is a Cohen-Macaulay algebra such that the injective envelope $I$ of a dualizing module $W$, viewed as a right $\L$-module, is projective.
By \cref{claim_10}, we may assume that $W= \prod_{Q^\prime} KQ^\prime/R^{L+1} \oplus \bigoplus_{Q^{\prime\prime}} D\!\left(KQ^{\prime\prime}/R^{L+1}\right)$, where $Q^\prime$ (resp.~$Q^{\prime\prime}$) runs over the acyclic connected components of $Q$ (resp.~ the connected components of $Q$ without  sink or source).
Since $I\in\proj\L$, it follows that $D\!\left(KQ^{\prime\prime}/R^{L+1}\right)\in\proj\L$ for all $Q^{\prime\prime}$.
Then, together with the fact that every indecomposable projective-injective module over a monomial algebra is uniserial, this implies that each of the algebras $KQ^{\prime\prime}/R^{L+1}$ of infinite global dimension is a self-injective Nakayama algebra.
Moreover, for the same reason, each of the algebras $KQ^{\prime}/R^{L+1}$ of finite global dimension 
is both left and right serial, that is, Nakayama.
Therefore, $\L=\prod_{Q^\prime} KQ^\prime/R^{L+1} \times \prod_{Q^{\prime\prime}} KQ^{\prime\prime}/R^{L+1}$ is a Nakayama algebra.
The last statement is clear.
\end{proof}

Following \cite{Iyama-Solberg_2018}, we call an algebra $\L$ {\it minimal Auslander-Gorenstein} if $\injdim \L \leq d \leq \domdim \L$ for some $d \geq 2$.
In this case, $\L$ is Iwanaga-Gorenstein (\cite[Proposition 4.1]{Iyama-Solberg_2018}).
Following \cite{Chan-Iyama-Marczinzik_2024_Cohen-Macaulay}, we call a Cohen-Macaulay algebra $\L$ with dualizing module $W$ {\it minimal Auslander-Cohen-Macaulay} if $\injdim W \leq d \leq \domdim W$ for some $d \geq 2$.
Clearly, minimal Auslander-Gorenstein algebras are minimal Auslander-Cohen-Macaulay.

\begin{corollary}\label{claim_11}
The following conditions are equivalent for a truncated path algebra $\L$.
\begin{enumerate}
\item $\L$ is a minimal Auslander-Cohen-Macaulay algebra.
\item $\L$ is a minimal Auslander-Gorenstein Nakayama algebra.
\end{enumerate}
\end{corollary}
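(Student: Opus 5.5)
The implication (2) $\Rightarrow$ (1) is immediate from the remark preceding the statement: minimal Auslander-Gorenstein algebras are minimal Auslander-Cohen-Macaulay with dualizing module $\L$. So the work is in (1) $\Rightarrow$ (2).

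Assume $\L = KQ/R^{L+1}$ is minimal Auslander-Cohen-Macaulay with dualizing module $W$. Then $\injdim W \leq d \leq \domdim W$ for some $d\geq 2$.

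First, since $\domdim W = \domdim \L \geq d \geq 2 \geq 1$, the algebra $\L$ is $1$-Gorenstein. This uses the discussion before \cref{claim_9}: by \cite[Proposition 2.6]{Chan-Iyama-Marczinzik_2024_Cohen-Macaulay}, $\L$ is $n$-Gorenstein if and only if $W$ is, and $W$ is $1$-Gorenstein exactly when $\domdim_\L W\geq 1$. Hence \cref{claim_9} applies: $\L$ is Nakayama and Iwanaga-Gorenstein.

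Next I identify $W$ with $\L$. For an Iwanaga-Gorenstein algebra, $\L$ itself is a dualizing module. By \cref{remark_1}, a dualizing module is unique up to isomorphism as a one-sided module, so $W\cong \L$ as right and as left modules. Therefore $\injdim W = \injdim \L_\L$. Moreover $\domdim W = \domdim \L$ by definition of $\domdim W$ (or directly from $W_\L\cong\L_\L$). This gives $\injdim \L \leq d \leq \domdim \L$ with $d\geq 2$, so $\L$ is minimal Auslander-Gorenstein.

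The only delicate point is the convention for $\injdim W$ in the definition: one-sided versus two-sided, and $\injdim \L$ versus $\injdim \L_\L$. By \cref{Auslander-Reiten_1991_1.6}, the one-sided injective dimensions of $W$ agree with each other and with $\findim\L$. For Iwanaga-Gorenstein $\L$, the two one-sided injective dimensions of $\L$ also coincide. So no ambiguity arises, and I expect this bookkeeping step to be the only place requiring care.
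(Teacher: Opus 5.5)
Your argument is correct and matches the paper's proof: (1) $\Rightarrow$ (2) follows from \cref{claim_9} once you observe that $\domdim W = \domdim \L \geq 2$ forces $1$-Gorensteinness, and the converse is the trivial inclusion. Your extra bookkeeping (using $\add W_\L = \add \L_\L$ for the Iwanaga-Gorenstein algebra $\L$ to transfer $\injdim W \leq d \leq \domdim W$ to $\L$) simply makes explicit what the paper leaves implicit.
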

\begin{proof} 
\cref{claim_9} shows the implication (1) $\Rightarrow$ (2).
The converse is clear.
\end{proof}



\subsection{Cohen-Macaulay Nakayama algebras} 

We need no lemma to obtain the following second main result of this paper, which shows that every Cohen-Macaulay Nakayama algebra is Iwanaga-Gorenstein.

\begin{theorem}\label{claim_1}
A Nakayama algebra $\L$ is Cohen-Macaulay if and only if it is Iwanaga-Gorenstein.
\end{theorem}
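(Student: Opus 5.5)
The \lq\lq if\rq\rq\ direction is immediate: an Iwanaga-Gorenstein algebra is Cohen-Macaulay with dualizing module $\L$. For the converse, let $\L$ be a Cohen-Macaulay Nakayama algebra with dualizing module $W$. By \cref{claim_8} it suffices to show that $W_\L$ has finite projective dimension, or equivalently that $\L_\L$ has finite injective dimension. We may assume $\L$ is connected and basic. If $\L$ has a linear quiver, then $\gldim\L<\infty$ and there is nothing to prove. So the real case is a cyclic Nakayama algebra with Kupisch series $(c_1,\dots,c_n)$.

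The plan is to use the description of $W$ in \cref{Preliminaries_1}: $\add W_\L=\add X$, where $X$ is the minimal left $\Ifin(\L)$-approximation of $\L_\L$. Every indecomposable summand of $W$ is uniserial and lies in $\Ifin(\L)$. By \cref{Auslander-Reiten_1991_1.3}, $W$ is Ext-maximal cotilting on both sides, and $\End(W_\L)\cong\L$. In particular $\End(W_\L)$ is again a Nakayama algebra, with the same number $n$ of simples, and its quiver is the cyclic quiver of $\L$.

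The key steps are as follows.

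\begin{enumerate}
\item Show that each indecomposable summand $W_i$ of $W_\L$ has a simple socle, and that these socles are pairwise distinct. Here I would use condition (5) of \cref{Preliminaries_1} applied to $_\L W$, together with its right-hand analogue, and duality.
\item Exploit the isomorphism $\L\cong\End(W_\L)$. For uniserial modules over a cyclic Nakayama algebra, nonzero maps between them are determined by compositions of subfactor embeddings and quotients. Comparing $\dim_K\Hom(W_i,W_j)$ with $\dim_K e_j\L e_i$ should force the lengths of the $W_i$ to reproduce the Kupisch series of $\L$, up to a shift of indices.
\item Conclude from this that the $W_i$ are either the indecomposable projectives or the indecomposable injectives. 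In the second case $D(\L)\in\Ifin$ is the maximum cotilting module, so $\findim\L=0=\findim\L^\op$ by \cref{Chan-Iyama-Marczinzik_2024_Cohen-Macaulay_Example1.3(1)_corollary}. A Nakayama algebra with $\soc\L_\L$ containing all simples must then be self-injective, because distinct projective uniserials have distinct socles when all simples occur and there are $n$ of them. Hence $\L$ is Iwanaga-Gorenstein in either case.
\end{enumerate}

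The main obstacle is step 2: ruling out a mixed $W$, some of whose summands are neither projective nor injective. The alternative route I would try first is homological. Take a summand $W_i\in\Ifin(\L)$ and its minimal injective coresolution. Over a Nakayama algebra the cosyzygies of a uniserial module are uniserial, computed via the Gustafson function. Then show that finiteness of both $\injdim W_i$ and $\injdim {}_\L W$, combined with $D(\L)\in\widehat{\add W}$, forces each indecomposable injective to have finite projective dimension. This is done by tracking the \lq\lq resolution quiver\rq\rq\ (Ringel's combinatorics for Nakayama algebras): the Gorenstein property corresponds to the cycles of the resolution quiver consisting of Gorenstein projectives, and the existence of an Ext-maximal cotilting module on both sides should force exactly that. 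I expect verifying this combinatorial step to be where most of the work lies.
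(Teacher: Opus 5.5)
There is a genuine gap. It sits where you expect it, but Step~1 also contains an error.

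\textbf{Step~1 is false as stated.} Condition (5) of \cref{Preliminaries_1}, applied to the Ext-maximal cotilting module ${}_\L W$ (whose opposite endomorphism ring is $\L$ via right multiplication), says that the \emph{top} $W_\L/\rad W_\L$ contains every simple $\L$-module. Passing to $D$ only moves this to the socle of $D(W)$, not to the socle of $W$. The summands of $W_\L$ need not have pairwise distinct socles. If $\L$ is Iwanaga-Gorenstein, then $W_\L\cong\L_\L$ (\cref{remark_1}). Take the Nakayama algebra on the oriented $3$-cycle whose indecomposable projectives have lengths $3,2,2$. It is Iwanaga-Gorenstein of injective dimension $3$ and not self-injective. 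Its length-$3$ projective has radical isomorphic to a length-$2$ projective, so these two summands of $W_\L$ share their socle.

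\textbf{Step~2 is not proved.} It is the heart of the matter, and you only assert that the Hom-dimension comparison ``should force'' the lengths. The fallback via Ringel's resolution quivers is a plan, not an argument. So nothing in the proposal rules out a mixed $W$.

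\textbf{The missing idea.} Use the top statement instead. It makes Step~2 short and removes both the linear/cyclic split and the projective/injective dichotomy.
\begin{enumerate}
\item Every indecomposable $\L$-module is uniserial, hence of the form $P_i/\rad^{r}P_i$.
\item By Miyashita, $|\ind\add W|=n$, and the tops cover all $n$ simples. Hence $W_\L\cong\bigoplus_i (P_i/\rad^{r_i}P_i)^{s_i}$, with exactly one summand type for each $P_i$.
\item The isomorphism $\L\cong\End(W_\L)$ then forces $\rad^{r_i}P_i=0$ for all $i$. The paper argues with a generator of the simple socle of $P_i$: its image under $\varphi$ would have to be nonzero, yet it acts as zero if $r_i<\ell(P_i)$.
\item Alternatively, for basic $\L$ one has $s_i=1$, so $W_\L$ is a quotient of $\L_\L$. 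The count $\dim_K\L=\dim_K\End(W_\L)\le\dim_K\Hom_\L(\L,W)=\dim_K W\le\dim_K\L$ then forces $W_\L\cong\L_\L$.
\end{enumerate}
Thus $W_\L$ is projective, and \cref{claim_8} finishes the proof. No Kupisch-series or resolution-quiver combinatorics is needed.
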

\begin{proof}
It suffices to show the \lq\lq only if\rq\rq~part.
Suppose that $\L$ is a Cohen-Macaulay algebra with dualizing module $W$.
Fix a complete set $\{P_1, \ldots, P_n\}$ of pairwise non-isomorphic indecomposable projective $\L$-modules. 
Since $W_\L/\rad W_\L$ contains all simple $\L$-modules by \cref{Preliminaries_1}, it follows that
\begin{align}
    W_\L = (P_1/\rad^{r_1}\!P_1)^{s_1} \oplus \cdots \oplus (P_n/\rad^{r_n}\!P_n)^{s_n},
\end{align}
where $r_i$ and $s_i$ are positive integers, and $X^t$ denotes the direct sum of $t$ copies of a $\L$-module $X$.
Assume $r_i < m_i=l(P_i)$ (i.e.~$\rad^{r_i}\!P_i\neq 0$) for some $1\leq i \leq n$.
Since $\L$ is Nakayama, we have $\soc P_i = \rad^{m_i-1}P_i=x\L$ for some $x\in\L$.
Then the natural algebra isomorphism $\varphi:\L\xrightarrow{\sim}\End(W_\L)$ sends the generator $x$ of the simple socle of $P_i$ to a generator $\varphi(x)$ of the simple socle of the indecomposable projective $\End(W_\L)$-module 
$\Hom_\L(W, P_i/\rad^{r_i}\!P_i)$.
However, since $x = 0$ in $P_i/\rad^{r_i}\!P_i$ by assumption, we have $\varphi(x)(w) = x w = 0$ for all $w\in W$, and hence $\varphi(x)=0$, a contradiction.
Therefore, $r_i \geq m_i=l(P_i)$ (i.e.~$\rad^{r_i}\!P_i=0$)  for all $1\leq i \leq n$.
Now, since $W_\L=\L_\L\in\proj\L$, \cref{claim_8} concludes that $\L$ is Iwanaga-Gorenstein.
\end{proof}

\begin{remark}\label{remark_2}
\begin{enumerate}
\item The theorem shows that there is no non-Iwanaga-Gorenstein Cohen-Macaulay Nakayama algebra.
\item The theorem remains true for the case where Nakayama algebras are artin.
\item In \cite[Proposition 5]{Ringel_2013}, Ringel characterized Iwanaga-Gorenstein Nakayama algebras in terms of resolution quivers.
\end{enumerate}
\end{remark}



\section*{Acknowledgments}
The second author would like to thank Osamu Iyama, Aaron Chan and Yuya Otake for kindly answering his questions.
The suggestions by the first two helped improve the paper.
The first author was partially supported by JSPS KAKENHI Grant Number 26K16972.
The second author was partially supported by JSPS KAKENHI Grant Number 25K17243.

\bibliographystyle{plain}
\bibliography{ref}

\end{document}